\newtheorem{theorem}{Theorem}[section]
\newtheorem{lemma}[theorem]{Lemma}
\newtheorem{prop}[theorem]{Proposition}
\newtheorem{assumption}[theorem]{Assumption}
\newtheorem{corro}[theorem]{Corollary}
\theoremstyle{definition}
\newtheorem{example}[theorem]{Example}
\theoremstyle{remark}
\newtheorem{remark}[theorem]{Remark}
\numberwithin{equation}{section}
\DeclareMathAlphabet{\mathsl}{OT1}{cmss}{m}{sl}
\SetMathAlphabet{\mathsl}{bold}{OT1}{cmss}{bx}{sl}
\newcommand{\al}{\ensuremath{\alpha}}
\newcommand{\ga}{\ensuremath{\gamma}}
\newcommand{\de}{\ensuremath{\delta}}
\newcommand{\ze}{\ensuremath{\zeta}}
\renewcommand{\th}{\ensuremath{\theta}}
\newcommand{\ka}{\ensuremath{\kappa}}
\newcommand{\la}{\ensuremath{\lambda}}
\newcommand{\si}{\ensuremath{\sigma}}
\newcommand{\om}{\ensuremath{\omega}}
\newcommand{\ve}{\ensuremath{\varepsilon}}
\newcommand{\Ga}{\ensuremath{\Gamma}}
\newcommand{\Om}{\ensuremath{\Omega}}
\newcommand{\cB}{\ensuremath{\mathcal B}}
\newcommand{\cE}{\ensuremath{\mathcal E}}
\newcommand{\cF}{\ensuremath{\mathcal F}}
\newcommand{\cL}{\ensuremath{\mathcal L}}
\newcommand{\bbN}{\ensuremath{\mathbb N}}
\newcommand{\bbR}{\ensuremath{\mathbb R}}
\newcommand{\bbZ}{\ensuremath{\mathbb Z}} 
\newcommand{\me}{\ensuremath{\mathrm{e}}}
\newcommand{\md}{\ensuremath{\mathrm{d}}}
\newcommand{\scpr}[3]{%
  \ensuremath{%
    \big\langle
      #1, #2
    \big\rangle_{\raisebox{0.1ex}{$\scriptstyle \ell^{\raisebox{.1ex}{$\scriptscriptstyle 2$}} (#3)$}}
  }
}
\newcommand{\norm}[3]{%
  \ensuremath{%
    \big\lVert
      #1
    \big\rVert_{\raisebox{-.0ex}{$\scriptstyle \ell^{\raisebox{.2ex}{$\scriptscriptstyle #2$}} (#3)$}}
  }
}
\newcommand{\Norm}[2]{%
  \ensuremath{%
    \big\lVert
      #1
    \big\rVert_{\raisebox{-.0ex}{$\scriptstyle #2$}}
  }
}
\DeclareMathOperator{\mean}{\mathbb{E}}
\DeclareMathOperator{\prob}{\mathbb{P}}
\DeclareMathOperator{\Prob}{\mathrm{P}}
\DeclareMathOperator{\supp}{\mathrm{supp}}
\DeclareMathOperator{\argmax}{\mathrm{arg max}}
\newcommand{\av}[1]{\mathop{\mathrm{av}}(#1)}
\newcommand{\ldef}{\ensuremath{\mathrel{\mathop:}=}}
\newcommand{\rdef}{\ensuremath{=\mathrel{\mathop:}}}
\newcommand{\indicator}{%
  \ensuremath{%
    \mathchoice{1\mskip-4mu\mathrm l}
    {1\mskip-4mu\mathrm l}
    {1\mskip-4.5mu\mathrm l}
    {1\mskip-5mu\mathrm l}
  }
}
\begin{document}

\title{Heat kernel estimates for random walks with degenerate weights}

%    Remove any unused author tags.

%   author one information
\author{Sebastian Andres}
\address{Rheinische Friedrich-Wilhelms Universit\"at Bonn}
\curraddr{Endenicher Allee 60, 53115 Bonn}
\email{andres@iam.uni-bonn.de}
\thanks{}

%    author two information
\author{Jean-Dominique Deuschel}
\address{Technische Universit\"at Berlin}
\curraddr{Strasse des 17. Juni 136, 10623 Berlin}
\email{deuschel@math.tu-berlin.de}
\thanks{}

%    author three information
\author{Martin Slowik}
\address{Technische Universit\"at Berlin}
\curraddr{Strasse des 17. Juni 136, 10623 Berlin}
\email{slowik@math.tu-berlin.de}
\thanks{}

\subjclass[2010]{39A12,  60J35, 60K37, 82C41}

\keywords{random walk, heat kernel, Moser iteration}

\date{\today}

\dedicatory{}

\begin{abstract}
  We establish Gaussian-type upper bounds on the heat kernel for a continuous-time random walk on a graph with unbounded weights under an integrability assumption. For the proof we use Davies' perturbation method, where we show a maximal inequality for the perturbed heat kernel via Moser iteration.
\end{abstract}

\maketitle

\tableofcontents

\section{Introduction}

A well known theorem by Delmotte \cite{De99} states that Gaussian bounds on the heat kernel hold for random walks on locally finite graphs, provided the jump rates are uniformly elliptic, that is the transition probabilities are uniformly bounded and bounded away from zero.  In a recent work \cite{Fo11}, Folz showed Gaussian upper bounds for the heat kernel of continuous-time, elliptic random walks with arbitrary speed measure under the assumption  that on-diagonal upper bounds for the heat kernel at two points are given and the speed measure is uniformly bounded from below.  In the present paper we relax the uniform ellipticity condition and show a Gaussian-type upper bound for constant-speed and variable-speed random walks with unbounded jump rates satisfying a certain integrability condition. 

\subsection{Setting and Result}
Let $G = (V, E)$ be an infinite, connected, locally finite graph with vertex set $V$ and (non-oriented) edge set $E$.  We will write $x \sim y$ if $\{x,y\} \in E$.  The graph $G$ is endowed with the counting measure, i.e.\ the measure of $A \subset V$ is simply the number $|A|$ of elements in $A$.  Further, we denote by $B(x,r)$ the closed ball with center $x$ and radius $r$ with respect to the natural graph distance $d$, that is $B(x,r) \ldef \{y \in V \mid d(x,y) \leq r\}$.

For a given set $B \subset V$, we define the \emph{relative} internal boundary of $A \subset B$ by
\begin{align*}
  \partial_B A
  \;\ldef\;
  \big\{
    x \in A \;\big|\;
    \exists\, y \in B \setminus A\; \text{ s.th. }\; \{x,y\} \in E
  \big\}
\end{align*}
and we simply write $\partial A$ instead of $\partial_V A$.  Throughout the paper we will make the following assumption on $G$.
\begin{assumption}\label{ass:graph}
  The graph $G$ satisfies the following conditions:
  \begin{itemize}
  \item[(i)] volume regularity of order $d$ for large balls, that is there exists $d \geq 2$ and $C_{\mathrm{reg}} \in (0, \infty)$ such that for all $x \in V$ there exists $N_1(x) < \infty$ with
    \begin{align}\label{eq:ass:vd}
      C^{-1}_{\mathrm{reg}}\, n^d  \;\leq\; |B(x,n)| \;\leq\; C_{\mathrm{reg}}\, n^d
      \qquad \forall\, n \geq N_1(x). 
    \end{align}
  \item[(ii)] local Sobolev inequality $(S_{d'}^1)$ for large balls, that is there exists $d' \geq d$ and $C_{\mathrm{S_1}} \in (0, \infty)$ such that for all $x \in V$ the following holds. There exists $N_2(x) < \infty$ such that for all  $n \geq N_2(x)$, 
    \begin{align}\label{eq:ass:riso}
      \Bigg(\sum_{y \in B(x,n)}\! |u(y)|^{\frac{d'}{d'-1}}\Bigg)^{\!\!\frac{d'-1}{d'}}
      \;\leq\;
      C_{\mathrm{S_1}}\, n^{1 - \frac{d}{d'}}\mspace{-6mu}
      \sum_{\substack{y \vee z \in B(x,n)\\ \{y,z\} \in E}}\mspace{-6mu}
      \big|u(y) - u(z) \big|
    \end{align}
    for all $u\! : V \to \bbR$ with $\supp u \subset B(x,n)$.
  \end{itemize}
\end{assumption}
\begin{remark}\label{rem:graph:Zd}
  The Euclidean lattice, $(\bbZ^d, E_d)$, satisfies the Assumption~\ref{ass:graph} with $d' = d$ and $N_1(x) = N_2(x) = 1$.
\end{remark}
\begin{remark}
  It was recently shown in \cite{Ng14}, that the infinite cluster of a supercritical Bernoulli percolation satisfies the Assumption~\ref{ass:graph} for some $d' > d$.
\end{remark}
\begin{remark}
  The following \emph{strong isoperimetric inequality for large balls} is sufficient for the local Sobolev inequality $(S^1_{d'})$ to hold.  That is, for all $n$ large enough,
  \begin{align}
    |\partial A|
    \;\geq\;
    C_{\mathrm{iso}}\, |A|^{(d-1)/d},
    \qquad
    \forall\, A \subset B(x,n)\; \text{ s.th. } |A| \geq n^{\th},
  \end{align}
  where $\th = (d'-d)/(d'-1)$, see \cite{DNS15}.
\end{remark}
Consider a family of positive weights $\om = \{\om(e) \in (0, \infty) : e \in E\}$.  With an abuse of notation we also denote the \emph{conductance matrix} by $\om$, that is for $x, y \in V$ we set $\om(x,y) = \om(y,x) = \om(\{x,y\})$ if $\{x,y\} \in E$ and $\om(x,y) = 0$ otherwise.  We also refer to $\om(e)$ as the \emph{conductance} of the edge $e$. Let us further define measures $\mu^{\om}$ and $\nu^{\om}$ on $V$ by
\begin{align*}
  \mu^{\om}(x) \;\ldef\; \sum_{y \sim x}\, \om(x,y)
  \qquad \text{and} \qquad
  \nu^{\om}(x) \;\ldef\; \sum_{y \sim x}\, \frac{1}{\om(x,y)}.
\end{align*}
For any fixed $\om$ we consider a continuous time Markov chain, $Y = (Y_t\!: t \geq 0 )$, on $V$ with generator $\cL^{\om} \equiv \cL^{\om}_{\mathrm{C}}$ acting on bounded functions $f\!:  V \to \bbR$ as
\begin{align} \label{def:LY}
  \big(\cL^{\om} f)(x)
  \;=\; \frac{1}{\mu^\om(x)}
  \sum_{y \sim x} \om(x,y) \, \big(f(y) - f(x)\big).
\end{align}
Let us stress the fact that the Markov chain, $Y$, is \emph{reversible} with respect to the measure $\mu^{\om}$.  Setting  $p^{\om}(x,y) \ldef \om(x,y) / \mu^{\om}(x)$, this stochastic process waits at $x$ an exponential time with mean $1$ and chooses its next position $y$ with probability $p^{\om}(x,y)$.  Since the law of the waiting times does not depend on the location, $Y$ is also called the \emph{constant speed random walk} (CSRW).  

Another natural choice for a random walk that jumps from $x$ to $y$ with probability $p^{\om}(x,y)$ is  the \emph{variable speed random walk} (VSRW) $X = (X_t\!: t \geq 0 )$, which waits at $x$ an exponential time with mean $1/\mu(x)$, with generator given by
\begin{align*}
  \big(\cL^{\om}_\mathrm{V} f)(x)
  \;=\;
  \sum_{y \sim x} \om(x,y)\, \big(f(y) - f(x)\big)
  \;=\;
  \mu^\om(x)\, \big(\cL^{\om}_{\mathrm{C}} f)(x).
\end{align*}
We recall that  the VSRW $X$ is reversible with respect to the counting measure and that the CSRW and the VSRW are time-changes of each other. More precisely, $Y_t = X_{a_t}$ for $t \geq 0$, where $a_t \ldef \inf\{s \geq 0 : A_s > t\}$ denotes the right continuous inverse of the functional
\begin{align*}
  A_t \;=\; \int_0^t \mu^\om(X_s) \, \md s, \qquad t \geq 0.
\end{align*}
We denote by $\Prob_x^{\om}$ the law of the process $X$ or $Y$, respectively, starting at the vertex $x \in V$. For $x, y \in V$ and $t \geq 0$ let
$q^{\om}(t,x,y)$ and $p^{\om}(t,x,y)$  be the transition densities of $Y$ and $X$ with respect to the reversible measures (or the \emph{heat kernels} associated with $\cL^{\om}_{\mathrm{C}}$ and $\cL^{\om}_{\mathrm{V}}$), i.e.\
\begin{align*}
  q^{\om}(t,x,y)
  \;\ldef\;
  \frac{\Prob_x^{\om}\big[Y_t = y\big]}{\mu^{\om}(y)}, \qquad p^{\om}(t,x,y)
  \;\ldef\;
  \Prob_x^{\om}\big[X_t = y\big].
\end{align*}
For any non-empty, finite $A \subset V$ and $p \in [1, \infty)$, we introduce space-averaged $\ell^{p}$-norms on functions $f\!: A \to \bbR$ by the usual formula
\begin{align*}
  \Norm{f}{p,A}
  \;\ldef\;
  \Bigg(
    \frac{1}{|A|}\; \sum_{x \in A}\, |f(x)|^p
  \Bigg)^{\!\!\frac{1}{p}}
  \qquad \text{and} \qquad
  \Norm{f}{\infty, A} \;\ldef\; \max_{x\in A} |f(x)|.
\end{align*}
Further, for any $x \in V$ we set
\begin{align*}
  \bar{\mu}_p(x)
  \;\ldef\;
  \limsup_{n\to \infty} \Norm{\mu^{\om}}{p,B(x,n)}
  \qquad \text{and} \qquad
  \bar{\nu}_q(x)
  \;\ldef\;
  \limsup_{n \to \infty} \Norm{\nu^{\om}}{q,B(x,n)}.
\end{align*}
For our main results we need to make the following assumption on the integrability of the conductances.
\begin{assumption} \label{ass:lim_const}
  There exist $p, q \in (1,\infty]$ with
  \begin{align}\label{eq:cond:pq}
    \frac{1}{p} \,+\, \frac{1}{q} \;<\; \frac{2}{d'}
  \end{align}
  such that
  \begin{align*}
    \bar{\mu}_p
    \;\ldef\;
    \sup_{x\in V} \bar{\mu}(x)
    \;<\;
    \infty
    \qquad \text{and} \qquad
    \bar{\nu}_q
    \;\ldef\;
    \sup_{x\in V} \bar{\nu}(x)
    \;<\;
    \infty.
  \end{align*}
  In particular, for every $x \in V$ there exists $N(x) \equiv N(x,\om)$ such that
  \begin{align*}
    \sup_{n \geq N(x)} \Norm{\mu^{\om}}{p,B(x,n)}
    \;\leq\;
    2 \bar{\mu}_p(x)
    \qquad \text{and} \qquad
    \sup_{n \geq N(x)} \Norm{\nu^{\om}}{q,B(x,n)}
    \;\leq\;
    2 \bar{\nu}_q(x).
  \end{align*}
\end{assumption} 
Our aim is to continue the program initiated in \cite{ADS13,ADS13a}.  In \cite{ADS13} we showed a quenched invariance principle for the random walks $X$ and $Y$ on the integer lattice $\bbZ^d$ under ergodic, degenerate random conductances satisfying a certain moment condition so that Assumption~\ref{ass:lim_const} is fulfilled (cf.\ Remark~\ref{rem:rcm} below).  In \cite{ADS13a} we established elliptic and parabolic Harnack inequalities for the generators, from which a local limit theorem for the heat kernel were deduced. In this paper we prove  Gaussian-type upper bound on the heat kernels. Let us first consider the CSRW.
\begin{theorem} \label{thm:hke}
  Suppose that Assumption~\ref{ass:lim_const} holds.  Then, there exist constants $c_i = c_i(d, p, q, \bar{\mu}_p, \bar{\nu}_q)$ such that for any given $t$ and $x$ with $\sqrt{t} \geq N(x) \vee 2(N_1(x) \vee N_2(x))$ and all $y \in V$ the following hold.
  \begin{enumerate}
  \item [(i)] If $d(x,y)\leq c_1 t$ then
    \begin{align*}
      q^\om(t,x, y)
      \;\leq\;
      c_2\, t^{-d/2}\,  \exp\!\big(\!-c_3\, d(x,y)^2/t\big).
    \end{align*}
  \item [(ii)] If $d(x,y)\geq c_1 t$ then
    \begin{align*}
      q^\om(t,x, y)
      \;\leq\;
      c_2 \, t^{-d/2} \exp\!\big(\!-c_4\, d(x,y) (1 \vee \log(d(x,y)/t))\big).
    \end{align*}
  \end{enumerate}
\end{theorem}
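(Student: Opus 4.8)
The plan is to follow Davies' perturbation method, which is the standard route to Gaussian upper bounds once one has on-diagonal decay. Concretely, one first proves an on-diagonal estimate $q^\om(t,x,y) \le c\, t^{-d/2}$ for $\sqrt t \ge N(x)\vee 2(N_1(x)\vee N_2(x))$, and then upgrades it to the Gaussian bound by conjugating the semigroup with an exponential weight $e^{\psi}$ and controlling the resulting ``energy'' correction. The on-diagonal bound itself is obtained by a Moser iteration: starting from the space-averaged Sobolev inequality (Assumption~\ref{ass:graph}(ii)) and the volume regularity (Assumption~\ref{ass:graph}(i)), combined with the moment bounds $\bar\mu_p,\bar\nu_q<\infty$ (Assumption~\ref{ass:lim_const}) to absorb the degenerate conductances, one runs the iteration on the solution $u(t) = q^\om(t,x,\cdot)$ of the heat equation to bound $\|u(t)\|_{\infty}$ by a suitable space-average of $u$ at an earlier time, and then by $L^1$ conservation of $u(t,x,\cdot)$ against $\mu^\om$. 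The condition \eqref{eq:cond:pq}, $1/p+1/q<2/d'$, is exactly what makes the exponents in the iteration summable.

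Next I would introduce the Davies perturbation. For a bounded function $\psi\colon V\to\bbR$, set $h^\om_\psi(t,x,y) \ldef e^{-\psi(x)} q^\om(t,x,y) e^{\psi(y)}$; this is the kernel of the semigroup generated by the conjugated operator $\cL^\om_\psi = e^{-\psi}\cL^\om e^{\psi}$. One shows a maximal inequality: $\|h^\om_\psi(t,x,\cdot)\|_{\infty,\mu^\om}$ (or the appropriate weighted sup) is bounded by $c\,t^{-d/2}\exp(c\, t\, \Gamma(\psi))$, where $\Gamma(\psi)$ measures the ``oscillation energy'' of $\psi$, something like $\|e^{|\nabla\psi|}-1\|$ weighted against the conductances, which for Lipschitz $\psi$ with small constant $\lambda$ is of order $\lambda^2$ (times a constant depending on $\bar\mu_p,\bar\nu_q$). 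This step is again a Moser iteration, but now for the perturbed heat equation $\partial_t u = \cL^\om_\psi u$; the extra first-order term produced by conjugation is handled using Young's inequality and reabsorbed, at the cost of the factor $e^{ct\Gamma(\psi)}$. Then $q^\om(t,x,y) = e^{\psi(x)}h^\om_\psi(t,x,y)e^{-\psi(y)} \le c\,t^{-d/2}\exp\big(\psi(x)-\psi(y)+ct\Gamma(\psi)\big)$, and optimizing over $\psi(z) = -\lambda d(y,z)$ (so $\psi(x)-\psi(y) = -\lambda d(x,y)$, $\Gamma(\psi)\asymp \lambda^2$) over $\lambda\ge 0$ gives, in the regime $d(x,y)\le c_1 t$, the bound $c\,t^{-d/2}\exp(-c\,d(x,y)^2/t)$, which is part (i); in the complementary regime $d(x,y)\ge c_1 t$ the optimizer saturates and one only gets $\exp(-c\,d(x,y)(1\vee\log(d(x,y)/t)))$, which is part (ii).

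A technical point worth flagging: because the conductances are unbounded, the graph distance $d$ is not comparable to the ``intrinsic'' metric adapted to $\cL^\om$, so one should not expect a clean Gaussian tail for all $d(x,y)$ — this is precisely why the theorem splits into the near regime and the far regime, and why part (ii) has the weaker $\log$-corrected exponent rather than $d(x,y)^2/t$. Also, the cutoff $\sqrt t \ge N(x)\vee 2(N_1(x)\vee N_2(x))$ is needed so that at every scale appearing in the iteration (all balls $B(x,n)$ with $n\gtrsim\sqrt t$) the volume regularity, Sobolev inequality, and moment bounds are all in force; keeping careful track of which radii enter the iteration is where most of the bookkeeping lies.

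The main obstacle I expect is the Moser iteration for the \emph{perturbed} kernel: one must run the iteration with the degenerate weights $\om$ present, using Hölder's inequality to split off $\|\mu^\om\|_{p}$ and $\|\nu^\om\|_{q}$ factors at each step, while simultaneously tracking the perturbation energy $\Gamma(\psi)$ so that it appears only as an overall multiplicative $e^{ct\Gamma(\psi)}$ and does not degrade the $t^{-d/2}$ prefactor or the geometric convergence of the iteration exponents. Ensuring that the constants $c_i$ depend only on $d,p,q,\bar\mu_p,\bar\nu_q$ — and in particular not on $\om$ beyond these averaged quantities, nor on the point $x$ — requires the local (large-ball) hypotheses to be invoked uniformly, and that uniformity, together with the summability forced by \eqref{eq:cond:pq}, is the crux of the argument.
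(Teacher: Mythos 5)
Your overall strategy---Davies' perturbation combined with a Moser iteration run directly on the perturbed equation $\partial_t v-\cL^{\om}_{\phi}v=0$, with H\"older's inequality splitting off $\|\mu^{\om}\|_{p}$ and $\|\nu^{\om}\|_{q}$ so that \eqref{eq:cond:pq} makes the iteration exponents work, followed by optimizing $\psi=-\la\,\min\{d(x,\cdot),d(x,y)\}$ and reading off the two regimes---is indeed the paper's approach. There is, however, one concrete gap in how you produce the $t^{-d/2}$ prefactor for the \emph{perturbed} kernel. You propose to first prove an unperturbed on-diagonal bound using ``$L^1$ conservation'' and then assert that the same Moser iteration yields $\|h^{\om}_{\psi}(t,x,\cdot)\|_{\infty}\leq c\,t^{-d/2}e^{c\,t\,\Gamma(\psi)}$. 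But the Moser iteration only delivers an $L^2\to L^\infty$-type maximal inequality on a parabolic cylinder centered at the fixed point $x_0$ (a bound on $\max v$ by the space--time $L^2$ norm of $v$), and the perturbed semigroup is not Markovian, so there is no $L^1$ conservation to invoke; moreover no Nash inequality is available in this degenerate setting, which is precisely why the paper cannot, and does not, start from a global on-diagonal estimate. The paper closes this gap by (a) the a-priori estimate $\|\phi u_t\|_{\ell^2(V,\mu^{\om})}\leq e^{h(\phi)t}\|\phi f\|_{\ell^2(V,\mu^{\om})}$ (Lemma~\ref{lem:apriori}), which combined with the Moser output gives Corollary~\ref{cor:max_predual}, i.e.\ an $L^2\to L^\infty$ bound for $P^{\psi}_t$ carrying a weight $(1+d(x_0,\cdot)/\sqrt{t})^{\gamma}$ with $\gamma=2\kappa-d/2$; and (b) Zhikov's duality/composition step: the adjoint of $P^{\psi}_t$ is $P^{-\psi}_t$ and $h(\phi)$ is invariant under $\psi\mapsto-\psi$, so the same estimate dualizes to an $L^1\to L^2$ bound, and composing the two at time $t/2$ yields the $L^1\to L^\infty$ bound of Proposition~\ref{prop:est_cp}, with prefactor $t^{-2\kappa}$ and two polynomial factors whose exponent $\gamma$ is exactly tuned so that $t^{-2\kappa}(\sqrt{t})^{2\gamma}=t^{-d/2}$; those polynomial factors are then absorbed into the exponential after setting $x=x_0$. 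Without step (b), or some substitute for the missing $L^1$ conservation, your sketch does not produce the claimed kernel bound, and your separate ``first prove on-diagonal decay'' step is in any case not how (nor in this generality how easily) the argument runs.

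A smaller point: your explanation for the dichotomy (i)/(ii) is misattributed. For the CSRW the split is not caused by the unboundedness of the conductances or by a mismatch between $d$ and an intrinsic metric (that issue concerns the VSRW and $d_{\om}$); it is the usual Poissonian tail of a unit-jump, unit-rate walk, already present in the uniformly elliptic case, and it comes out of the optimization because $h(\phi)=2(\cosh(\la)-1)$ grows exponentially rather than quadratically for large $\la$.
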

\begin{remark}
  Note that the moment condition on $\nu^{\om}$ can be improved by imposing a moment condition on $\nu^{\bar{\om}}$ for some weight $\bar{\om}$ provided the corresponding Dirichlet forms are comparable, that is
  \begin{align}\label{eq:DF:compare}
    \sum_{x,y \in V} \bar{\om}(x,y)\, \big(f(x) - f(y)\big)^2
    \;\leq\;
    \sum_{x,y \in V} \om(x,y)\, \big(f(x) - f(y)\big)^2.
  \end{align}
  In particular, notice that for any $t>0$ and
  \begin{align*}
    \bar{\om}(x,y) \;=\; \frac{1}{t} \mu^{\om}(x)\, q^{\om}(t,x,y)\, \mu^{\om}(y)
    \qquad \text{or} \qquad
    \bar{\om}(x,y) \;=\; \frac{1}{t} \mu^{\om}(x)\,p^{\om}(t,x,y)
  \end{align*}
  the inequality \eqref{eq:DF:compare} holds true.  Hence, it suffices that a moment condition on the heat kernel is satisfied, cf.\ \cite[Section~6]{ADS13a}.
\end{remark}
One well-established model which naturally fulfills our assumptions is the random conductance model on $\bbZ^d$.
\begin{remark} \label{rem:rcm}
  Consider the $d$-dimensional Euclidean lattice $\bbZ^d$ with $d \geq 2$, and let $E_d$  be the set of all non-oriented nearest neighbour bonds, i.e.\ $E_d \ldef \{ \{x,y\}: x,y \in \bbZ^d, |x-y|=1\}$.  Then, $(\bbZ^d, E_d)$ satisfies the Assumption \ref{ass:graph} as pointed out in Remark~\ref{rem:graph:Zd}.  Further, let $\prob$ be a probability measure on the measurable space $(\Om, \cF) = \big(\bbR_+^{E_d}, \cB(\bbR_+)^{\otimes\, E_d}\big)$  and write $\mean$ for the expectation with respect to $\prob$.  The space shift by $z \in \bbZ^d$ is the map $\tau_z\!: \Om \to \Om$ defined by $ (\tau_z \om)(x, y) \ldef \om(x+z, y+z)$ for all  $\{x,y\} \in E_d$.  Now assume that $\prob$ satisfies the following conditions.
  \begin{enumerate}[(i)]
  \item $\prob$ is ergodic with respect to translations of $\bbZ^d$, i.e. $\prob \circ\, \tau_x^{-1} \!= \prob\,$ for all $x \in \bbZ^d$ and $\prob[A] \in \{0,1\}\,$ for any $A \in \cF$ such that $\tau_x(A) = A\,$ for all $x \in \bbZ^d$.
  \item There exist $p,q \in (1, \infty]$ satisfying $1/p + 1/q < 2/d$ such that
    \begin{align} \label{eq:moment}
      \mean\!\big[\om(e)^p\big] \;<\; \infty
      \quad \text{and} \quad
      \mean\!\big[\om(e)^{-q}\big] \;<\; \infty
    \end{align}
    for any $e \in E_d$.
  \end{enumerate}
  Then, the spatial ergodic theorem gives that for $\prob$-a.e.\ $\om$,
  \begin{align*}
    \lim_{n \to \infty} \Norm{\mu^{\om}}{p, B(n)}^p
    \;=\;
    \mean\!\big[ \mu^{\om}(0)^p\big]%^{1/p}
    \;<\;
    \infty
    \quad \text{and} \quad
    \lim_{n\to \infty} \Norm{\nu^{\om}}{q, B(n)}^q
    \;=\;
    \mean\big[\nu^{\om}(0)^q\big]%^{1/q}
    \;<\;
    \infty.
  \end{align*}
  In particular, Assumption \ref{ass:lim_const} is fulfilled in this case and therefore for $\prob$-a.e.\ $\om$ the upper estimates on $q^\om_t(x,y)$ in Theorem~\ref{thm:hke} hold.  Unfortunately, we cannot  provide any control on the size of $\{N^{\om}(x): x \in V\}$ in the context of general ergodic environments, as we would need some information on the speed of convergence in the ergodic theorem, which is not available in this general framework unless we make additional mixing assumptions. 

  It has been been shown in \cite[Theorem 5.4]{ADS13a} that the moment condition in \eqref{eq:moment} is optimal for a local limit theorem to hold. In particular, this moment condition is also necessary for \emph{both} upper and lower Gaussian near-diagonal bounds to be satisfied. 
\end{remark}
Next we state the upper bounds on the heat kernel $p^{\om}(t, x ,y)$ of the VSRW. For that purpose we need to introduce the so called chemical distance $d^{\om}$ defined by
\begin{align*}
  d^{\om}(x,y)
  \;\ldef\;
  \inf_{\gamma}
  \Bigg\{
    \sum_{i=0}^{l_{\gamma}-1} 1 \wedge \om(z_i,z_{i+1})^{-1/2}
  \Bigg\},
\end{align*}
where the infimum is taken over all paths $\ga = (z_0, \ldots, z_{l_\gamma})$  connecting $x$ and $y$.  Note that $d^{\om}$ is a metric which is adapted to the transition rates of the random walk.  In particular, $d^{\om}(x,y) \leq d(x,y)$ for all $x,y \in V$.

% We denote by $\tilde{B}(x,r)$ the closed ball with center $x$ and radius $r$ with respect to $d^{\om}$, that is $\tilde{B}(x,r) \ldef \{y \in V \mid d^{\om}(x,y) \leq r\}$.  Notice that $d^{\om}(x,y) \leq d(x,y)$ for all $x,y \in V$ and therefore $B(x,r) \subseteq \tilde{B}(x,r)$ for all $x\in V$ and $r > 0$.  Moreover, for any $x \in V$ we define 
% \textcolor{red}{
% %
% \begin{align*}
%   \tilde{\mu}_p(x)
%   \;\ldef\;
%   \limsup_{n\to \infty} \Norm{\mu^{\om}}{p,\tilde{B}(x,n)}
%   \quad \text{and} \quad
%   \tilde{\nu}_q(x)
%   \;\ldef\;
%   \limsup_{n \to \infty} \Norm{\nu^{\om}}{q,\tilde{B}(x,n)}.
% \end{align*}
% % 
For the VSRW we impose the following assumption on the vertex degree of the underlying graph.
\begin{assumption}\label{ass:lim_const:VSRW}
  There exists $C_{\text{deg}} < [1,\infty)$ such that
  \begin{align}
    |\{y \,:\, y \sim x\}| \;\leq\; C_{\text{deg}}
    \qquad \forall\, x \in V.
  \end{align}
\end{assumption}
\begin{theorem} \label{thm:hke_VSRW}
  Suppose that Assumption~\ref{ass:lim_const} and \ref{ass:lim_const:VSRW} holds.  Then, there exist constants $c_i = c_i(d, p, q, \bar{\mu}_p, \bar{\nu}_q)$ and $\ga = \ga(d,p,q)$ such that for any given $t$ and $x$ with $\sqrt{t} \geq N(x) \vee 2(N_1(x) \vee N_2(x))$ and all $y \in V$ the following hold.
  \begin{enumerate}
  \item [(i)] If $d^{\om}(x,y)\leq c_5 t$ then
    \begin{align*}
      p^{\om}(t,x, y)
      \;\leq\;
      c_6\, t^{-d/2}\, \bigg( 1+\frac{d(x,y)}{\sqrt{t}} \bigg)^{\!\!\ga}\,
      \exp\!\big(\!-c_7\, d^{\om}(x,y)^2/t\big).
    \end{align*}
  \item [(ii)] If $d^\om(x,y)\geq c_5 t$ then
    \begin{align*}
      p^{\om}(t,x, y)
      \;\leq\;
      c_6 \, t^{-d/2}\, \bigg( 1+\frac{d(x,y)}{\sqrt{t}} \bigg)^{\!\!\ga}\,
      \exp\!\bigg(%
        \!-c_8\, d^{\om}(x,y) \bigg(1 \vee \log\frac{d^{\om}(x,y)}{t}\bigg)
      \bigg).
    \end{align*}
  \end{enumerate}
\end{theorem}
As already mentioned in the beginning, for random walks on weighted graphs Gaussian type estimates on the heat kernel have been proven by Delmotte \cite{De99} in the case, where the conductances are uniformly elliptic, i.e.\ $c^{-1} \leq \om (e) \leq c$, $e\in E$, for some $c \geq 1$. However, Gaussian bounds do not hold in general as under i.i.d.\ conductances with fat tails at zero the heat kernel decay may be sub-diffusive due to a trapping phenomenon -- see \cite{BBHK08, BB12}. 

On the other hand, in the symmetric setting it is well known that on-diagonal estimates are equivalent to a Nash inequality of the type
\begin{align*}
  \Bigg(\sum_{x \in V}\; |f(x)|^{2}\Bigg)^{\!\!1 + 2/d'}
  \!\leq\;
  C_{\mathrm{Nash}}\,
  \Bigg(\sum_{x,y \in V}\om(x,y)\, \big( f(x) - f(y) \big)^2\Bigg)
  \Bigg(\sum_{x \in V}\; |f(x)| \Bigg)^{\!\!4/d'}\mspace{-18mu},
\end{align*}
see \cite[Theorem 2.1]{CKS87}.  In particular, such a Nash inequality holds on $\bbZ^d$ with $d' = d$ and thus for the conductance model with conductances that are uniformly bounded away from zero.  However, in the general unbounded case no such Nash inequality is available.  Recently, an anchored version of the Nash inequality has been introduced by Mourrat and Otto in \cite{MO15} for a random conductance model on $\bbZ^d$ with conductances unbounded from below but bounded from above.  In particular, they obtain on-diagonal upper bounds under a suitable moment condition that are very similar to the one stated in Assumption~\ref{ass:lim_const}.  Remarkably, their results extend to degenerate time-dependent conductances.

For the VSRW with conductances that are only uniformly bounded away from zero, Gaussian off-diagonal bounds have been proven in \cite[Theorems 2.19 and 3.3] {BD10}.  In this setting, upper bounds have also been obtained in \cite[Theorem 10.1]{Mo09}.  However, the distance function that appears in the upper estimates for the VSRW in \cite{BD10,Fo11,Mo09} and in Theorem~\ref{thm:hke_VSRW} above is the chemical distance which can be quite different from the graph metric as the following example shows.
\begin{example}
  Let $\{Z_k : k \in \bbZ \}$ be a collection of i.i.d.\ random variables on a probability space $(\Om, \cF, \prob)$, taking values in $[1, \infty)$ with tail behaviour $\prob\big[ Z_1 > u \big] \sim u^{-\al}$ as $u \to \infty$ for $\al > 1$.  We fix a constant $c_Z > 0$ and $u_0 > 0$ such that $\prob\big[ Z_1 > u \big]= (c_Z u)^{-\alpha}$ for all $u \geq u_0$.  As underlying graph we take the two-dimensional Euclidean lattice $\bbZ^2$. Let $e_1$, $e_2$ be the canonical basis vectors of $\bbR^2$ and for any $x \in \bbZ^2$ we write $x^i$, $i = 1,2$, for its coordinates.  Consider an ergodic environment of random conductances defined by
  \begin{align*}
    \om(x,y)
    \;\ldef\;
    \begin{cases}
      1,            & \text{if } x-y = \pm e_2,\\
      Z_{x^2}, \quad & \text{if } x-y = \pm e_1. 
    \end{cases}
  \end{align*}
  That is, all edges in vertical direction (meaning $e_2$ direction) have conductance $1$, while the conductances on edges in horizontal direction are random, constant along each line, but independent between different lines.  Note that this example can be easily generalised to arbitrary dimensions $d \geq 2$.
\end{example}
In this example the chemical distance becomes much smaller than the Euclidean distance as stated in the following lemma whose proof will be given in Appendix~\ref{sec:chemical:distance}.
\begin{lemma}\label{lemma:chemical:distance}
  For $\de \in (0,1)$ and $\prob$-a.e.\ $\om$ there exists $0 < c_Z(\al, \de) < C_Z(\al, \de) < \infty$ and $L_0 = L_0(\om) < \infty$ such that for all $L \geq L_0$,
  \begin{align}
    c_Z\, (\ln L)^{-(1+\de)/(2\al+1)}\,L^{2\al / (2\al+1)}
    \;\leq\;
    d^{\om}(0, L e_1)
    \;\leq\;
    C_Z(\ln L)^{\de/2\al}\,L^{2\al / (2\al+1)}.
  \end{align}
\end{lemma}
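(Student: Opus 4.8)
The plan is to analyze the chemical distance $d_\om(0, L e_1)$ separately for the two inequalities, exploiting the layered structure of the environment. Recall that along a horizontal line $\{x^2 = k\}$ every edge has conductance $Z_k$, so a horizontal step in that line costs $1 \wedge Z_k^{-1/2}$, while every vertical step costs $1$. A path from $0$ to $Le_1$ that wants to be cheap will climb vertically to a line $k$ where $Z_k$ is large (so that $Z_k^{-1/2}$ is small), travel horizontally along that line, and climb back. If it uses a line at height $|k| \le m$ and $Z_k \ge M$, the cost is at most $2m + L M^{-1/2}$ (ignoring the $1\wedge$, which only helps). Optimizing, one wants the largest value $M_m \ldef \max_{|k| \le m} Z_k$ available within a vertical budget $m$; by the tail assumption $\prob[Z_1 > u] = (c_Z u)^{-\al}$, the maximum of $\sim 2m$ i.i.d.\ samples satisfies $M_m \asymp m^{1/\al}$ up to logarithmic corrections (more precisely, Borel--Cantelli gives $M_m \ge c\, m^{1/\al} (\ln m)^{-\de/(2\al) \cdot \text{something}}$ eventually, and $M_m \le C\, m^{1/\al}(\ln m)^{1/\al}$ eventually). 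Substituting $M = M_m$ and minimizing $2m + L M_m^{-1/2} \asymp 2m + L m^{-1/(2\al)}$ over $m$ gives the optimal scale $m_* \asymp L^{2\al/(2\al+1)}$ and a total cost $\asymp L^{2\al/(2\al+1)}$, with the logarithmic factors giving the stated $(\ln L)^{\de/2\al}$ correction in the upper bound. This establishes the upper bound after a careful bookkeeping of which line achieves the near-maximum and checking it lies within distance $m_*$ of the origin (this is where the Borel--Cantelli / explicit tail estimate enters).

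For the lower bound I would argue that no path can do essentially better than the above strategy. Consider any path $\ga = (z_0, \dots, z_{l_\ga})$ from $0$ to $Le_1$. Let $m$ be the maximal $|z_i^2|$ reached along the path, so the path makes at least $2m$ vertical steps (it must come back), costing at least $2m$. On the other hand, the horizontal displacement $L$ is accumulated only through horizontal steps, each made in some line at height $|k| \le m$ and hence costing at least $1 \wedge Z_k^{-1/2} \ge M_m^{-1/2}$ where $M_m = \max_{|k|\le m} Z_k$; therefore the horizontal cost is at least $L \cdot (1 \wedge M_m^{-1/2})$. Hence $d_\om(0,Le_1) \ge \inf_m \big( 2m + L (1\wedge M_m^{-1/2}) \big)$. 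Plugging in the upper bound $M_m \le C m^{1/\al}(\ln m)^{1/\al}$ valid for all $m \ge$ some $L_0(\om)$ (and noting that for $m$ too small, $M_m$ is not large enough to help and the $2m$ term is already handled, while for $m$ very large the $2m$ term dominates), minimizing over $m$ reproduces the scale $L^{2\al/(2\al+1)}$ with a logarithmic correction $(\ln L)^{-(1+\de)/(2\al+1)}$; the slightly worse exponent $(1+\de)$ compared with the naive $1$ absorbs the gap between $M_m$ and the scale $m_*$ and the sub-exponential fluctuations in the extreme-value estimate.

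The main obstacle is the uniformity: the estimates on $M_m = \max_{|k| \le m} Z_k$ must hold simultaneously for all $m \ge L_0(\om)$ with a single almost-sure constant, so that the optimization over $m$ (which depends on $L$) is legitimate. This is handled by a Borel--Cantelli argument: for the two-sided control $c\, m^{1/\al}(\ln m)^{-\ep(m)} \le M_m \le C\, m^{1/\al}(\ln m)^{1/\al}$, one shows that the probability that this fails for a given dyadic scale $m \in [2^j, 2^{j+1})$ is summable in $j$, using $\prob[M_m \le u] = (1 - (c_Z u)^{-\al})^{2m+1} \approx \exp(-2m (c_Z u)^{-\al})$ for the lower tail and a union bound for the upper tail; interpolating between dyadic scales then gives the bound for all $m$. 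Once this deterministic-looking inequality for $d_\om(0,Le_1)$ in terms of $M_m$ is in place, both directions are elementary calculus. I expect the proof to be roughly one page, with the extreme-value lemma and its Borel--Cantelli justification taking the bulk of it.
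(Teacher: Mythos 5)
Your strategy coincides with the paper's: the upper bound comes from the explicit path that climbs to the best line within a vertical window of width $L^{2\alpha/(2\alpha+1)}$, and the lower bound from $d_\omega(0,Le_1)\ge \inf_m\big(2m+L(1\wedge M_m^{-1/2})\big)$ with $M_m=\max_{|k|\le m}Z_k$, both fed by almost-sure two-sided bounds on the running maximum; the paper simply quotes these extreme-value bounds from Embrechts--Kl\"uppelberg--Mikosch (Theorems 3.5.1 and 3.5.2) rather than re-deriving them by Borel--Cantelli, and handles the small-$m$ regime by replacing $m$ with $m\vee L_0(\omega)$ inside the maximum, which is the same device as your parenthetical remark.

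One step needs correction as stated: the claimed almost-sure bound $M_m\le C\,m^{1/\alpha}(\ln m)^{1/\alpha}$ for all large $m$ is false. Since the $Z_k$ are independent and $\mathbb{P}\big[Z_k>(k\ln k)^{1/\alpha}\big]\asymp (k\ln k)^{-1}$ is not summable, the second Borel--Cantelli lemma gives $Z_k>(C k\ln k)^{1/\alpha}$ infinitely often for every constant $C$, so no dyadic union bound can rescue that exponent. The correct eventually-valid envelope is $M_m\le \big(m(\ln m)^{1+\delta}\big)^{1/\alpha}$ (any log-power making the tail series summable), which is exactly what the paper uses, and plugging it into your optimization $\inf_m\big(2m+L M_m^{-1/2}\big)$ yields precisely the $(\ln L)^{-(1+\delta)/(2\alpha+1)}$ factor in the lower bound --- so the $(1+\delta)$ in the lemma is not slack absorbing fluctuations elsewhere, but the direct image of the log-exponent you must put into the upper extreme-value bound. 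With that replacement (and the analogous lower envelope $M_m\ge\big(m/(\ln m)^{\delta}\big)^{1/\alpha}$ for the path construction, giving the $(\ln L)^{\delta/2\alpha}$ in the upper bound), your argument is complete and matches the paper's.
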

On the other hand, it is shown in \cite{BD10} that in the case of i.i.d.\ conductances the chemical distance $d^{\om}(x,y)$ can be compared with the graph distance $d(x,y)$ provided that $d(x,y)$ is large enough.
\begin{remark}
  In view of  Theorem \ref{thm:hke_VSRW} (i) for the choice $t=L$ one would get an upper bound  given by
  \begin{align*}
    p^\omega_t(0, t e_1)
    \;\leq\;
    C\, \exp(-c\, t^{(2\al-1 - \ve)/(2\al+1)})
  \end{align*}
  for any $\ve > 0$ and $L > L_0(\om)$.  However, this estimate is not optimal and does not match with a lower bound.  The correct order is of the form 
  \begin{align*}
    c \exp(-c\, t^{(3\al - 1 + \ve)/(3\al + 1)})
    \;\leq\;
    p^{\om}_t(0, t e_1)
    \;\leq\;
    C \exp(-c\, t^{(3\al - 1 - \ve)/(3\al + 1)}),
  \end{align*}
  which will be proven in an upcoming paper by the second author and R.\ Fukushima.
\end{remark}
Clearly, one would also like to establish corresponding lower bounds.  It is well known that Gaussian lower and upper bounds on the heat kernel are equivalent to a parabolic Harnack inequality in many situations, for instance in the case of uniformly elliptic conductances, see \cite{De99}.  Recently, this equivalence has also been established on locally irregular graphs in \cite{BC14}.  In our context such a parabolic Harnack inequality has been recently proven in \cite{ADS13a}.  Unfortunately, due to the special structure of the Harnack constant in \cite{ADS13a}, in particular its dependence on $\|\mu^{\om}\|_{p,B(x,n)}$ and $\|\nu^{\om}\|_{q,B(x,n)}$, we cannot directly deduce off-diagonal Gaussian lower bounds from it.  More precisely, in order to get effective Gaussian off-diagonal bounds using the established chaining argument (see e.g.\ \cite{Ba04}), one needs to apply the Harnack inequality on a number of balls with radius $n$ having a distance of order $n^2$.  In general, the ergodic theorem does not 
give the required uniform control on the convergence of space-averages of stationary random variables over such balls (see \cite{AJ75}).

Moreover, in the setting of VSRW with conductances unbounded from above but uniformly bounded from below the chaining technique would yield off-diagonal Gaussian lower bound with respect to the usual graph metric $d$ instead of the chemical distance $d^{\om}$.  Therefore, the problem to find matching upper and lower off-diagonal Gaussian bounds for general random conductance models remains open.

\subsection{The method}
A technique that turned out to perform quite well in order to prove the Gaussian upper bound in Theorem~\ref{thm:hke}, is known as \emph{Davies' method} in the literature (see e.g.\ \cite{Da89,Da93, CKS87}).  In contrast to the chaining argument mentioned above the main advantage of Davies' technique is that we only need to apply the ergodic theorem (or Assumption~\ref{ass:lim_const}, respectively) on balls with one fixed center point $x_0$.

We now briefly sketch the idea of Davies' method. Instead of studying the original semigroup $(P_t: t \geq 0)$ which is generating the random walk $Y$, that is
\begin{align*}
  \big(P_t f\big)(x)
  \;=\;
  \sum_{y \in V} \mu^\om(y)\, q^\om(t,x,y)\, f(y),
\end{align*}
Davies suggests to consider the semigroup $( P_t^\psi: t \geq 0 )$ given by
\begin{align*}
  \big(P_t^{\psi} f\big)(x)
  \;=\;
  \me^{\psi(x)}\, \big(P_t(e^{-\psi}f)\big)(x),
\end{align*}
with generator
\begin{align*}
  \big(\cL^{\psi} f\big)(x)
  \;=\;
  \me^{\psi(x)} \big(\cL^{\om}(e^{-\psi}f) \big)(x),
\end{align*}
for a suitable class of test functions $\psi$. Clearly, this semigroup has a kernel which is given by  $\me^{\psi(x)} q^\om(t,x,y)\, \me^{-\psi(y)}$ and satisfies the heat equation  $\partial_t v - \cL^\psi v=0$. Note that $P_t^{\psi}$ is symmetric with respect to the measure $\me^{-2\psi} \mu^{\om}$.

In the classical setting of symmetric Markov semigroups whose generator is a second order elliptic operator, the Nash inequality and equivalently Gaussian on-diagonal estimates do hold. Then, Davies used the classical Leibniz rule to derive a bound on the kernel of $(P_t^\psi : t \geq 0)$, which can be rewritten as
\begin{align*}
  q(t,x,y)
  \;\leq\;
  c\, t^{-d/2}\, \me^{\psi(y)-\psi(x)+t \Ga(\psi)},
\end{align*}
where $\Ga$ denotes the carr\'{e} du champ operator. Finally, by varying over $\psi$ Gaussian upper bounds can be obtained. For further details we refer to \cite{CKS87}. The method has also been used to obtain the Gaussian upper bounds in \cite{De99}.

In our setting, where the conductances are unbounded from below, we do not have a Nash inequality available. Therefore, we follow an approach used by Zhikov in \cite{Zi13}, where some upper bounds for the solution kernel of certain degenerate Cauchy problems on $\bbR^d$ are obtained. More precisely, we use Moser's iteration technique to show a maximal inequality for solution of $\partial_t v - \cL^{\psi} v = 0$ and combine it with Davies' method. Similarly to \cite{CKS87}, where Davies' method has been carried out for processes generated by non-local Dirichlet forms, one difficulty is the absence of a Leibniz rule in the discrete setting of a graph.  In \cite{ADS13a} we already established a Moser iteration scheme and a maximal inequality for solutions of the original heat equation $\partial_t u - \cL^{\om} u = 0$, so we adapt here the arguments in \cite{ADS13a} to deal with the perturbed semigroup $(P_t^\psi : t \geq 0)$.    

The rest of the paper is organised as follows. In Section~\ref{sec:GBcsrw} we prove Theorem~\ref{thm:hke} and in Section~\ref{sec:GBvsrw} we explain how the proof of Theorem~\ref{thm:hke} needs to be modified in order to obtain Theorem~\ref{thm:hke_VSRW}. The appendix contains the proof of Lemma~\ref{lemma:chemical:distance} as well as a collection of some elementary estimates needed in the proofs.  Throughout the paper we write $c$ to denote a positive constant which may change on each appearance. Constants denoted $C_i$ will be the same through each argument.

\section{Gaussian upper bounds for the CSRW} \label{sec:GBcsrw}
This section is devoted to the proof of Theorem~\ref{thm:hke}.  It is convenient to introduce a potential theoretic setup.  First of all, for $f\!: V \to \bbR$ and $F\!: E \to \bbR$ we define the operators $\nabla f\!: E \to \bbR$ and $\nabla^*F\!: V \to \bbR$ by
\begin{align*}
  \nabla f(e) \;\ldef\; f(e^+) - f(e^-),
  \qquad \text{and} \qquad
  \nabla^*F (x)
  \;\ldef\;
  \sum_{e: e^+ =\,x}\! F(e) \,-\! \sum_{e:e^-=\, x}\! F(e),
\end{align*}
where for each non-oriented edge $e \in E$ we specify one of its two endpoints as its initial vertex $e^+$ and the other one as its terminal vertex $e^-$.  Nothing of what will follow depend on the particular choice.  Since for all $f \in \ell^2(V)$ and $F \in \ell^2(E)$ it holds that $\scpr{\nabla f}{F}{E} = \scpr{f}{\nabla^* F}{V}$, $\nabla^*$ can be seen as the adjoint of $\nabla$.  For $f\!: V \to \bbR$ and $F\!: E \to \bbR$ we also define the products $f \cdot F$ and $F \cdot f$ by
\begin{align*}
  \big(f \cdot F\big)(e) \;\ldef\; f(e^-)\, F(e)
  \qquad \text{and} \qquad
  \big(F \cdot f\big)(e) \;\ldef\; f(e^+)\, F(e).
\end{align*}
Then, the discrete analog of the product rule can be written as
\begin{align}\label{eq:rule:prod}
  \nabla(f g)
  \;=\;
  \big(g \cdot \nabla f\big) \,+\, \big(\nabla g \cdot f\big)
  \;=\;
  \av{f} \nabla g \,+\, \av{g} \nabla f,
\end{align}
where $\av{f}(e) \ldef \frac{1}{2}(f(e^+) + f(e^-))$.  On the weighted Hilbert space $\ell^2(V, \mu^{\om})$ the \emph{Dirichlet form} or \emph{energy} associated to $\cL^{\om}$ is given by
\begin{align} \label{eq:def:dform}
  \cE^{\om}(f,g)
  \;\ldef\;
  \scpr{f}{-\cL^{\om} g}{V, \mu^\om}
  \;=\;
  \scpr{\nabla f}{\om \nabla g}{E}
  \;=\;
  \scpr{1}{\md \Ga^{\om}(f,g)}{E},
\end{align}
where $\md \Ga^{\om}(f,g) \ldef \om \nabla f \nabla g$ and we set $\cE^{\om}(f) \ldef \cE^{\om}(f,f)$.  For a given function $\eta\!: B \subset V \to \bbR$, we denote by $\cE_{\eta^2}^{\om}(u)$ the Dirichlet form where $\om(e)$ is replaced by $\av{\eta^2}\, \om(e)$ for $e \in E$.

\subsection{A-priori estimate for the perturbed Cauchy problem}
We consider now the Cauchy problem
\begin{align}\label{eq:cauchy_prob}
   \left\{
    \begin{array}{rcl}
      \partial_t u - \cL^{\om} u
      &\mspace{-5mu}=\mspace{-5mu}& 0,
      \\[1ex]
      u(t=0, \,\cdot\,)
      &\mspace{-5mu}=\mspace{-5mu}& f,
    \end{array}
  \right.
\end{align}
and for any positive function $\phi$ on $V$ such that $\phi, \phi^{-1} \in \ell^{\infty}(V)$ we define the operator $\cL_{\phi}^{\om}$ acting on bounded functions $g\!: V \to \bbR$ as
\begin{align*}
  (\cL_{\phi}^{\om}\, g)(x)
  \;\ldef\;
  \phi(x) (\cL^{\om} \phi^{-1} g)(x).
\end{align*}
\begin{lemma} \label{lem:apriori}
  Suppose that $f \in \ell^2(V, \mu^{\om})$ and $u$ solves the corresponding Cauchy problem \eqref{eq:cauchy_prob}.  Further, set $v(t,x) \ldef \phi(x) u(t,x)$ for a positive function $\phi$ on $V$ such that $\phi, \phi^{-1} \in \ell^{\infty}(V)$.  Then
  \begin{align}\label{eq:hke:apriori}
    \norm{v(t, \,\cdot\,)}{2}{V, \mu^{\om}}
    \;\leq\;
    \me^{h(\phi) t}\, \norm{\phi f}{2}{V, \mu^{\om}},
  \end{align}
  where
  \begin{align*}
    h^{\om}(\phi)
    \;\ldef\;
    \max_{x \in V}
    \frac{1}{\mu^{\om}(x)}\,
    \sum_{y \sim x} \big| \md\Ga^{\om}(\phi, \phi^{-1})(\{x,y\}) \big|.
    % \bigg(
    %   \frac{\phi(e^+)}{\phi(e^-)} \,+\, \frac{\phi(e^-)}{\phi(e^+)} \,-\, 2
    % \bigg).
  \end{align*}
\end{lemma}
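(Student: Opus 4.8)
The plan is to run the standard energy estimate for $t\mapsto\norm{v(t,\cdot)}{2}{V,\mu^{\om}}^2$, bound its time derivative by $h(\phi)$ times the quantity itself, and close the argument with Gronwall's inequality; the only step that is not completely routine is a discrete substitute for the Leibniz identity that is used at the corresponding point of Davies' method in the continuous setting. Before differentiating I would record the analytic facts that justify the manipulations: since $\big(\nabla g(e)\big)^2\le 2\big(g(e^+)^2+g(e^-)^2\big)$ and $\sum_{e\in E}\om(e)\big(g(e^+)^2+g(e^-)^2\big)=\norm{g}{2}{V,\mu^{\om}}^2$, one has $\cE^{\om}(g)\le 2\,\norm{g}{2}{V,\mu^{\om}}^2$ for every $g\in\ell^2(V,\mu^{\om})$, so $-\cL^{\om}$ is a bounded, non-negative, self-adjoint operator on $\ell^2(V,\mu^{\om})$. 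Hence the solution of \eqref{eq:cauchy_prob} is $u(t,\cdot)=P_tf$, it satisfies $\norm{u(t,\cdot)}{2}{V,\mu^{\om}}\le\norm{f}{2}{V,\mu^{\om}}$, and $t\mapsto u(t,\cdot)$ is continuously differentiable as a map into $\ell^2(V,\mu^{\om})$; since $\phi,\phi^{-1}\in\ell^{\infty}(V)$, the same holds for $v(t,\cdot)=\phi\,u(t,\cdot)$ and all the sums below converge absolutely.

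Using $\partial_t v=\phi\,\cL^{\om}u$, the identity $\phi^2u=\phi\cdot v$, and the definition \eqref{eq:def:dform} of the Dirichlet form,
\[
  \tfrac12\,\tfrac{\md}{\md t}\,\norm{v(t,\cdot)}{2}{V,\mu^{\om}}^2
  \;=\;
  \scpr{\phi^2 u(t,\cdot)}{\cL^{\om} u(t,\cdot)}{V,\mu^{\om}}
  \;=\;
  -\,\cE^{\om}\!\big(\phi^2 u(t,\cdot),\,u(t,\cdot)\big).
\]
The heart of the argument is the edge-wise identity
\[
  \nabla(\phi^2 u)(e)\,\nabla u(e)
  \;=\;
  \big(\nabla v(e)\big)^2
  \;-\;
  \big(\phi(e^+)-\phi(e^-)\big)^2\, u(e^+)\,u(e^-),
\]
obtained by merely expanding $\big(\phi(e^+)^2u(e^+)-\phi(e^-)^2u(e^-)\big)\big(u(e^+)-u(e^-)\big)$; this is the discrete analogue of the continuous identity $\nabla(\phi^2u)\,\nabla u=|\nabla v|^2-|\nabla\phi|^2u^2$ on which Davies' method rests.

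Multiplying the edge identity by $\om(e)$, summing over $e\in E$, and then inserting the elementary factorisation $\big(\phi(e^+)-\phi(e^-)\big)^2=\phi(e^+)\phi(e^-)\big(\tfrac{\phi(e^+)}{\phi(e^-)}+\tfrac{\phi(e^-)}{\phi(e^+)}-2\big)$ together with $v=\phi\,u$, one gets
\[
  \cE^{\om}\!\big(\phi^2 u,\,u\big)
  \;=\;
  \cE^{\om}(v)
  \;-\;
  \sum_{e\in E}\om(e)\,
  \Big(\tfrac{\phi(e^+)}{\phi(e^-)}+\tfrac{\phi(e^-)}{\phi(e^+)}-2\Big)\,
  v(e^+)\,v(e^-).
\]
Now $\cE^{\om}(v)\ge0$; each coefficient $\tfrac{\phi(e^+)}{\phi(e^-)}+\tfrac{\phi(e^-)}{\phi(e^+)}-2$ is non-negative by the arithmetic--geometric mean inequality and at most $h(\phi)$ by definition; and $v(e^+)v(e^-)\le\tfrac12\big(v(e^+)^2+v(e^-)^2\big)$. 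Combining these with $\sum_{e\in E}\om(e)\big(v(e^+)^2+v(e^-)^2\big)=\norm{v(t,\cdot)}{2}{V,\mu^{\om}}^2$ yields
\[
  \tfrac12\,\tfrac{\md}{\md t}\,\norm{v(t,\cdot)}{2}{V,\mu^{\om}}^2
  \;=\;
  -\,\cE^{\om}\!\big(\phi^2 u(t,\cdot),\,u(t,\cdot)\big)
  \;\leq\;
  \tfrac12\,h(\phi)\,\norm{v(t,\cdot)}{2}{V,\mu^{\om}}^2.
\]

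Gronwall's inequality then gives $\norm{v(t,\cdot)}{2}{V,\mu^{\om}}^2\le\me^{h(\phi)t}\,\norm{\phi f}{2}{V,\mu^{\om}}^2$, and taking square roots (using $h(\phi)\ge0$, so $\me^{h(\phi)t/2}\le\me^{h(\phi)t}$) proves \eqref{eq:hke:apriori}; in fact the argument yields the slightly sharper constant $h(\phi)/2$ in place of $h(\phi)$. The only real obstacle is the algebraic bookkeeping in the edge identity and the observation that the remainder coefficient is exactly the quantity appearing in the definition of $h(\phi)$; once that is in place, everything else is standard.
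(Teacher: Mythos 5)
Your proof is correct, and it follows the same overall strategy as the paper: differentiate $\norm{v_t}{2}{V,\mu^{\om}}^2$, bound the bilinear quantity $\cE^{\om}(\phi^2 u_t, u_t)=\scpr{\nabla(\phi v_t)}{\om\,\nabla(\phi^{-1}v_t)}{E}$ from below by a multiple of $-h(\phi)\norm{v_t}{2}{V,\mu^{\om}}^2$, and close with Gronwall. Where you genuinely differ is the algebraic core. The paper expands this form with the discrete product rule \eqref{eq:rule:prod}, uses the identities \eqref{eq:phi:property} to control the cross terms and the diagonal term, and is left with $\scpr{\av{v_t}^2}{\md\Ga^{\om}(\phi,\phi^{-1})}{E}$, estimated via $h(\phi)$ and Jensen to give \eqref{eq:hypo:L}. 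You instead use the exact edge-wise identity $\nabla(\phi^2u)\,\nabla u=(\nabla v)^2-(\nabla\phi)^2\,u(e^+)u(e^-)$ (the discrete Davies/ground-state identity), which replaces the multi-term expansion-plus-inequalities by a clean two-term identity and even yields the slightly sharper rate $h(\phi)/2$; the paper's weaker constant in \eqref{eq:hypo:L} is of course enough for \eqref{eq:hke:apriori}. One small point of bookkeeping: since $v(e^+)v(e^-)$ may be negative, the three facts you list must be combined in the order $c_e\,v(e^+)v(e^-)\le c_e\cdot\tfrac12\big(v(e^+)^2+v(e^-)^2\big)\le h(\phi)\cdot\tfrac12\big(v(e^+)^2+v(e^-)^2\big)$, where $c_e\ge 0$ denotes the coefficient $\tfrac{\phi(e^+)}{\phi(e^-)}+\tfrac{\phi(e^-)}{\phi(e^+)}-2$, rather than first replacing $c_e$ by $h(\phi)$; with that ordering your argument is complete. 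Finally, note why the paper takes the heavier route: the product-rule machinery it sets up here is exactly what is reused in Lemma~\ref{lem:moser_pre}, where the test function is $\eta^2\phi\,v^{2\al-1}$ and no exact identity of your type is available, so your shortcut buys elegance for this lemma but does not extend to the Moser iteration step.
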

\begin{proof}
  We start by observing that the function $v$, defined above, solves the Cauchy problem $\partial_t v - \cL_{\phi}^{\om}\, v = 0$ with initial condition $v(0, \cdot) = \phi f$.  As a first step, we show that for all $g \in \ell^{2}(V, \mu^{\om})$,
  \begin{align}\label{eq:hypo:L}
    \scpr{g}{-\cL_{\phi}^{\om}\, g}{V, \mu^{\om}}
    \;\geq\;
    -h(\phi)\, \norm{g}{2}{V, \mu^{\om}}^2.
  \end{align}
  But, an application of the product rule \eqref{eq:rule:prod} yields 
  \begin{align} \label{eq:decomp_av}
    \scpr{g}{-\cL_{\phi}^{\om}\, g}{V, \mu^{\om}}
    &\;=\;
    \scpr{\nabla(\phi\, g)}{\om\, \nabla(\phi^{-1} g)}{E}
    \nonumber\\[.5ex]
    &\;=\;
    \scpr{\av{\phi} \av{\phi^{-1}}}{\md\Ga^{\om}(g, g)}{E}
    \,+\,
    \scpr{\av{g}^2}{\md\Ga^{\om}(\phi, \phi^{-1})}{E}
    \nonumber\\
    &\mspace{36mu}+\,
    \scpr{\av{g}}
    {
      \av{\phi} \md\Ga^{\om}(g, \phi^{-1})
      \,+\,
      \av{\phi^{-1}} \md\Ga^{\om}(\phi,g)
    }{E}
    % \scpr{\av{\phi} \av{g}}{\md\Ga^{\om}(g, \phi^{-1})}{E}
    % \,+\,
    % \scpr{\av{\phi^{-1}} \av{g}}{\md\Ga^{\om}(\phi,g)}{E}
    \nonumber\\[.5ex]
    &\;\geq\;
    \scpr{\av{g}^2}{\md\Ga^{\om}(\phi, \phi^{-1})}{E},
  \end{align}
  where we exploit the fact that, for any $e \in E$,
  \begin{align}\label{eq:phi:property}
    \begin{array}{c}
      \av{\phi^{-1}}(e) (\nabla \phi)(e)
      \;=\;
      \dfrac{1}{2}\,
      \bigg(
        \dfrac{\phi(e^+)}{\phi(e^-)} \,-\, \dfrac{\phi(e^-)}{\phi(e^+)}
      \bigg)
      \;=\;
      -\av{\phi}(e) (\nabla \phi^{-1})(e),
      \\[2.5ex]
      \av{\phi^{-1}}(e) \av{\phi}(e)
      \;=\;
      \dfrac{1}{4}\,
      \bigg(
        \dfrac{\phi(e^+)}{\phi(e^-)} \,+\, \dfrac{\phi(e^-)}{\phi(e^+)} \,+\, 2
      \bigg)
      \;\geq\;
      1.
    \end{array}
  \end{align}
  Note that $(\nabla \phi)(\nabla \phi^{-1}) \leq 0$. Then, $\av{g}^2 \leq \av{g^2}$ by Jensen's inequality, the claim \eqref{eq:hypo:L} follows.  Thus, setting $v_t(x) \ldef v(t,x)$, we have for any $t \geq 0$,
  \begin{align*}
    \partial_t \norm{v_t}{2}{V, \mu^{\om}}^2
    \;=\;
    2\, \scpr{v_t}{\cL_{\phi}^{\om} v_t}{V, \mu^{\om}}
    \;\stackrel{\eqref{eq:hypo:L}}{\leq}\;
    2\, h^{\om}(\phi)\, \norm{v_t}{2}{V, \mu^{\om}}^{2}.
  \end{align*}
  Solving this differential inequality gives immediately \eqref{eq:hke:apriori}.
\end{proof}

\subsection{Maximal inequality for the perturbed Cauchy problem}
Our next aim is to derive a maximal inequality for the function $v$. For that purpose we will adapt the arguments given in \cite[Section 4]{ADS13a} and set up a Moser iteration scheme.  For any non-empty, finite $B \subset V$ and $p \in [1, \infty)$, we introduce a space-averaged norm on functions $f\!: B \to \bbR$ by
\begin{align*}
  \Norm{f}{p,B,\mu^{\om}}
  \;\ldef\;
  \Bigg(
    \frac{1}{|B|}\, \sum_{x \in B} \mu^{\om}(x)\, |f(x)|^p
  \Bigg)^{\!\!1/p}.
\end{align*}
\begin{lemma} \label{lem:moser_pre}
  Suppose that $Q = I \times B$, where $I = [s_1, s_2] \subset \bbR$ is an interval and $B \subset V$ is finite and connected, and consider a function $\eta$ on $V$ with
  \begin{align*}
    \supp \eta \;\subset\; B, \qquad
    0 \;\leq\; \eta \;\leq\; 1 \qquad \text{and} \qquad
    \eta \equiv 0 \quad \text{on} \quad \partial B.
  \end{align*}
  For a given $\phi > 0$ with $\phi, \phi^{-1} \in \ell^{\infty}(V)$, let $v_t \geq 0$ be a solution of $\partial_t v - \cL_{\phi}^{\om} v \leq 0$ on $Q$.  Then, there exists $C_1 < \infty$ such that for all $\al \geq 1$, 
  \begin{align} \label{eq:DF:ode}
    \frac{\md}{\md t}\, \Norm{(\eta v_t^{\al})}{2, B, \mu^{\om}}^2
    \,+\,
    \frac{\cE_{\eta^2}^{\om}(v_t^{\al})}{|B|}
    \;\leq\;
    C_1 \al^2\,
    \Big(\norm{\nabla \eta}{\infty}{E}^2 + h^{\om}(\phi) \Big)\,
    \Norm{v_t^{\al}}{2, B, \mu^{\om}}^2.
  \end{align}
\end{lemma}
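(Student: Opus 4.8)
The plan is to run the first step of Moser's iteration for the perturbed operator $\cL_\phi^\om$, with the discrete product rule \eqref{eq:rule:prod} playing the role of the Leibniz rule. First I would fix $t$ in the interior of $I$ and test the differential inequality $\partial_t v_t - \cL_\phi^\om v_t \leq 0$ against the nonnegative function $x \mapsto \mu^\om(x)\,\eta(x)^2\,v_t(x)^{2\al-1}$; note $2\al-1\geq1$, so $v\mapsto v^{2\al-1}$ is $C^1$ on $[0,\infty)$ and there is no problem where $v_t$ vanishes. Summing over $B$, the contributions from $\partial B$ drop because $\eta\equiv0$ there, and since $\mu^\om\eta^2 v_t^{2\al-1}$ has finite support, the identities $v_t^{2\al-1}\partial_t v_t = \tfrac1{2\al}\partial_t v_t^{2\al}$ and $\cL_\phi^\om g = \phi\,\cL^\om(\phi^{-1}g)$ together with the symmetry of $\cL^\om$ recorded in \eqref{eq:def:dform} turn the tested inequality into
\begin{align*}
  \frac{|B|}{2\al}\,\frac{\md}{\md t}\Norm{\eta v_t^{\al}}{2,B,\mu^{\om}}^2
  \;\leq\;
  -\,\scpr{\nabla(\phi\,\eta^2 v_t^{2\al-1})}{\om\,\nabla(\phi^{-1} v_t)}{E}.
\end{align*}
Everything thus reduces to a purely spatial lower bound on the energy-type expression on the right.

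Writing $w \ldef v_t^{\al}$ and dropping the subscript $t$, the spatial estimate I aim for is, for an absolute constant $c>0$,
\begin{align*}
  \scpr{\nabla(\phi\,\eta^2 v^{2\al-1})}{\om\,\nabla(\phi^{-1} v)}{E}
  \;\geq\;
  \frac{c}{\al}\,\cE_{\eta^2}^{\om}(w)
  \,-\,
  c\,\al\,A(\phi)^2\big(1 + \norm{\nabla\eta}{\infty}{E}^2\big)
  \sum_{x\in B}\mu^\om(x)\,\eta(x)^2\,w(x)^2.
\end{align*}
To prove it I would expand $\nabla(\phi^{-1}v) = \av{\phi^{-1}}\nabla v + \av{v}\,\nabla\phi^{-1}$ and, applying \eqref{eq:rule:prod} in stages, $\nabla(\phi\,\eta^2 v^{2\al-1}) = \av{\phi}\av{\eta^2}\nabla(v^{2\al-1}) + \av{\phi}\av{v^{2\al-1}}\nabla\eta^2 + \av{\eta^2 v^{2\al-1}}\nabla\phi$, multiply out the edge sum, and isolate the diagonal term $\scpr{\av{\phi}\av{\phi^{-1}}\av{\eta^2}}{\om\,\nabla(v^{2\al-1})\,\nabla v}{E}$. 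This is the good term: $\av{\phi}\av{\phi^{-1}}\geq1$ by \eqref{eq:phi:property}, $\nabla(v^{2\al-1})\,\nabla v\geq0$, and the elementary inequality $(b^{2\al-1}-a^{2\al-1})(b-a)\geq\tfrac{2\al-1}{\al^2}(b^\al-a^\al)^2$ (Cauchy--Schwarz applied to $b^\beta-a^\beta=\beta\int_a^b s^{\beta-1}\,\md s$) gives $\nabla(v^{2\al-1})\,\nabla v\geq\tfrac1\al(\nabla w)^2$; hence this term is $\geq\tfrac1\al\cE_{\eta^2}^{\om}(w)$, exactly as in the unperturbed scheme of \cite[Section~4]{ADS13a}.

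Every remaining term carries a factor $\nabla\phi$, $\nabla\phi^{-1}$ or $\nabla\eta^2$, and I would bound them all by Young's inequality $2|ab|\leq\la a^2 + \la^{-1}b^2$, choosing $\la$ of order $1/\al$ so that a small multiple of the good term $\tfrac c\al\cE_{\eta^2}^{\om}(w)$ is absorbed on the left and the leftover has the asserted form. The inputs are: $h(\phi)\leq 4A(\phi)$ and $A(\phi)\geq1$, together with $|\av{\phi^{-1}}\nabla\phi|\leq2A(\phi)$ and $|\av{g}^2\,\nabla\phi\,\nabla\phi^{-1}|\leq h(\phi)\,\av{g^2}$, to fold all $\phi$-gradients into the prefactor $A(\phi)^2$ and the additive constant $1$ (this is the same $h(\phi)$-bookkeeping as in Lemma~\ref{lem:apriori}); the elementary estimates of the appendix relating $\av{v^{2\al-1}}|\nabla v|$, $\av{v^{2\al}}$ and $|\nabla w|$; and $0\leq\eta\leq1$ with $\eta\equiv0$ on $\partial B$, to dispose of edges touching $\partial B$ and to pass from edge sums $\sum_e\om(e)\av{\eta^2}(e)(\ldots)$ to vertex sums $\sum_x\mu^\om(x)\eta(x)^2(\ldots)$. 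The precise power $\al^2\,A(\phi)^2\,\norm{\nabla\eta}{\infty}{E}^2$ in \eqref{eq:DF:ode} comes from the mixed term $\scpr{\av{\phi}\av{\phi^{-1}}\av{v^{2\al-1}}}{\om\,\nabla\eta^2\,\nabla v}{E}$: absorbing its $(\nabla w)^2$-part into the $\tfrac1\al$-sized good term forces $\la\sim1/\al$, hence $\la^{-1}\sim\al$, and the later multiplication by $2\al/|B|$ produces the second $\al$.

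Finally, inserting the spatial estimate, multiplying through by $2\al/|B|$, and using $\eta\leq1$ to replace $\Norm{\eta w}{2,B,\mu^{\om}}^2$ by $\Norm{w}{2,B,\mu^{\om}}^2$ on the right, one obtains \eqref{eq:DF:ode} after renaming constants (the coefficient of $\cE_{\eta^2}^{\om}(w)/|B|$ is an absolute constant, which I would absorb into $C_1$, or normalize to $1$ by rescaling). I expect the main obstacle to be the error-term bookkeeping: the discrete product rule generates several cross terms carrying averaging operators $\av{\cdot}$ that do not factor cleanly, and one must both choose the Young weights so that no power worse than $\al^2$ is produced and carefully handle the edges with one endpoint in $\partial B$, where $\eta$ vanishes but $v$ need not; the $\phi$-perturbation only adds terms of the type already controlled in the proof of Lemma~\ref{lem:apriori}.
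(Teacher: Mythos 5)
Your proposal is correct and follows essentially the same route as the paper's proof: test with $\eta^{2}v_t^{2\al-1}$ against the $\ell^{2}(V,\mu^{\om})$ pairing, expand via the discrete product rule \eqref{eq:rule:prod}, extract the good term through $\av{\phi}\av{\phi^{-1}}\geq 1$ and the chain-rule inequality \eqref{eq:rule:chain:1}, and control the cross terms with the appendix estimates, \eqref{eq:phi:property}, Jensen, and Young's inequality with weight of order $\al$. The only difference is organizational (you expand all six cross terms at once, whereas the paper groups them into the two terms $T_1$ and $T_2$), and your bookkeeping for the $\al^{2}A(\phi)^{2}(1+\norm{\nabla\eta}{\infty}{E}^{2})$ prefactor and for the absorption of part of the Dirichlet term matches the paper's.
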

\begin{proof}
  Since $\partial_t v - \cL_{\phi}^{\om}v \leq 0$ on $Q$ we have for every $t \in I$ and $\al \geq 1$,
  \begin{align}\label{eq:initial:step}
    \frac{1}{2 \al}\,
    \frac{\md}{\md t}\, \norm{(\eta v_t^{\al})}{2}{V, \mu^{\om}}^2
    % \;=\;
    % \scpr{\eta^2 v_t^{2 \al -1}}{\partial_t v_t}{V, \mu^{\om}}
    &\;\leq\;
    \scpr{\eta^2 v_t^{2 \al-1}}{\cL_{\phi}^{\om} v_t}{V, \mu^{\om}}
    \nonumber\\[.5ex]
    &\;=\;
    - \scpr{\nabla (\eta^2 \phi\, v_t^{2 \al-1})}{\om \nabla (\phi^{-1} v_t)}{E}.
  \end{align}
  By applying the product rule \eqref{eq:rule:prod}, we obtain
  \begin{align*}
%    &\scpr{\eta^2 v_t^{2 \al-1}}{-\cL_{\phi}^{\om} v_t}{V,\mu^{\om}}
%    \\[.5ex]
%    &\mspace{36mu}=\;
    &\scpr{\nabla (\eta^2 \phi\, v_t^{2 \al-1})}{\om \nabla (\phi^{-1} v_t)}{E}
    \nonumber\\[.5ex]
    &\mspace{36mu}=\;
    \scpr{\av{\eta^2}}{\md \Ga^{\om}(\phi\, v_t^{2 \al -1}, \phi^{-1} v_t)}{E}
    \,+\,
    \scpr{\av{\phi\, v_t^{2 \al -1}}}{\md \Ga^{\om}(\eta^2, \phi^{-1} v_t)}{E}
    \\[.5ex]
    &\mspace{36mu}\rdef\;
    T_1 \,+\, T_2.
   % \quad (\text{say}).
  \end{align*}
  First, we derive a lower bound for $T_1$. Recall that $\md\Ga^{\om}(\phi, \phi^{-1}) \leq 0$.  In view of \eqref{eq:phi:property}, an expansion of $\md \Ga^{\om}(\phi\, v_t^{2\al-1}, \phi^{-1}v_t)$ by means of the product rule \eqref{eq:rule:prod} yields
  \begin{align}\label{eq:estimate:T1}
    \md \Ga^{\om}(\phi\, v_t^{2\al-1}, \phi^{-1} v_t)
    &\;\geq\;
    \av{\phi} \av{\phi^{-1}}\,\frac{2\al-1}{\al^2}\, 
    \md \Ga^{\om}(v_t^{\al}, v_t^{\al}) 
    \,+\,
    \av{v_t^{2\al}}\, \md \Ga^{\om}(\phi, \phi^{-1})
    \nonumber\\[.5ex]
    &\mspace{36mu}-\,
    \frac{2(\al-1)}{\al}\, \av{v_t^{\al}} \av{\phi}\,
    \big|\md \Ga^{\om}(v_t^{\al}, \phi^{-1})\big|,
  \end{align}
  where we used the fact that for any $\al > 1/2$,
  \begin{align*} 
    \md \Ga^{\om}(v_t^{2\al-1}, v_t)
    &\overset{\eqref{eq:rule:chain:1}}{\;\geq \;}
    \frac{2\al-1}{\al}\, \md\Ga^{\om}(v_t^{\al}, v_t^{\al}), 
  \end{align*}
  and that by H\"older's inequality, $\av{v_t^{\al_1}} \av{v_t^{\al_2}} \leq \av{v_t^{\al_1+\al_2}}$ for any $\al_1, \al_2 \geq 0$.  Moreover, we used that
  \begin{align*}
    &\big|
      \av{v_t}(e) \nabla v_t^{2\al-1}(e) - \av{v_t^{2\al-1}}(e) \nabla v_t(e)
    \big|
    \\
    &\mspace{36mu}=\;
    \big|v_t^{2\al -1}(e^+) v_t(e^-) - v_t^{2\al-1}(e^-) v_t(e^+) \big|
    \;\stackrel{\eqref{eq:rule:chain:2}}{\leq}\;
    \frac{2(\al-1)}{\al}\, \big| \av{v_t^{\al}}(e)\, \nabla v_t^{\al}(e) \big|
  \end{align*}
  for all $e \in E$.  In view of \eqref{eq:phi:property} note that
  \begin{align}\label{eq:av:nabla:phi}
    \av{\phi} \big| \nabla \phi^{-1} \big|
    \;=\;
    \sqrt{\av{\phi} \av{\phi^{-1}}} 
    \cdot \sqrt{-(\nabla \phi) (\nabla \phi^{-1})}.
  \end{align}
  Thus, an application of Young's inequality, that reads $|a b| \leq \frac{1}{2} ( a^2/ \ve + \ve\, b^2 )$, with $\ve = 4 (\al-1)$ to the last term in \eqref{eq:estimate:T1} results in 
  \begin{align*}
    T_1
    \;\geq\;
    %\bigg(\frac{2 \al -1}{\al^2} - \frac{1}{4 \al} \bigg)\,
    \frac{3}{4\al}\,
    \scpr{\av{\eta^2} \av{\phi} \av{\phi^{-1}}}
    {\md\Ga^{\om}(v_t^{\al}, v_t^{\al})}{E}
    \,-\,
    4\al\, h^{\om}(\phi)\, |B|\,
    \Norm{v_t^{\al}}{2, B, \mu^{\om}}^2.
  \end{align*}
  Next we consider the term $T_2$.  Since $\av{\phi\, v_t^{2\al-1}} \leq 2 \av{\phi} \av{v_t^{2\al-1}}$, we obtain by expanding $\md \Ga^{\om}(\eta^2, \phi^{-1} v_t)$ by means of the product rule \eqref{eq:rule:prod} that
  \begin{align}\label{eq:estimate:T2}
    \av{\phi\, v_t^{2\al-1}}\,\md \Ga^{\om}(\eta^2, \phi^{-1} v_t)
    &\;\geq\;
    -\, 16 \av{\phi} \av{\phi^{-1}} \av{\eta} \av{v_t^{\al}}\,
    \big|\md \Ga^{\om}(\eta, v_t^{\al})\big|
    \nonumber\\[.5ex]
    &\mspace{36mu}-\,
    4 \av{v^{2\al}} \av{\phi} \av{\eta}\,
    \big|\md \Ga^{\om}(\eta, \phi^{-1})\big|,
  \end{align}
  where we used the fact that for any $\al \geq 1/2$,
  \begin{align*}
    \av{v_t^{2\al-1}} \big|\md \Ga^{\om}(\eta, v_t)\big|
    \;\stackrel{\eqref{eq:rule:chain:3}}{\leq}\;
    4\, \av{v_t^{\al}} \big|\md \Ga^{\om}(\eta, v_t^{\al})\big|.
  \end{align*}
  Thus, by applying Young's inequality with $\ve = 16 \al$ to the first term and \eqref{eq:av:nabla:phi} together with Young's inequality with $\ve = 1/2$ to the second term on the right-hand side of \eqref{eq:estimate:T2} we obtain that for any $\al \geq 1$,
  \begin{align*}
    T_2
    &\;\geq\;
    -\frac{1}{4\al}\,
    \scpr{\av{\eta^2} \av{\phi} \av{\phi^{-1}}}
    {\md\Ga^{\om}(v_t^{\al}, v_t^{\al})}{E}
    \\
    &\mspace{36mu}-\,
    \scpr{\av{\eta^2}\av{v_t^{2\al}}}{|\md\Ga^{\om}(\phi, \phi^{-1})|}{E}
    \\
    &\mspace{36mu}-\,
    260 \al\,
    \scpr{\av{\phi}\av{\phi^{-1}}\av{v_t^{2\al}}}{\md\Ga^{\om}(\eta,\eta)}{E}
    \\[.5ex]
    &\;\geq\;
    -\frac{1}{4\al}\,
    \scpr{\av{\eta^2} \av{\phi} \av{\phi^{-1}}}
    {\md\Ga^{\om}(v_t^{\al}, v_t^{\al})}{E}
    \\
    &\mspace{36mu}-\al\, 
    \Big(
      260\, \norm{\nabla \eta}{\infty}{E}^2 - 66\, h^{\om}(\phi)
    \Big)\, |B|\, \Norm{v_t^{\al}}{2, B, \mu^{\om}}^2,
  \end{align*}
  % \begin{align*}
  %   T_2
  %   &\;\geq\;
  %   -\frac{1}{4\al}\, \cE_{\eta^2}^{\om}(v_t^{\al})
  %   \,-\,
  %   \Big(
  %     64\,\al\, A(\phi)^2\, \norm{\nabla \eta}{\infty}{E}^2 
  %     + 2\, A(\phi)\, \norm{\nabla \eta}{\infty}{E}
  %   \Big)\, |B|\,
  %   \Norm{v_t^{\al}}{2, B, \mu^{\om}}^2
  %   \\[.5ex]
  %   &\;\geq\;
  %   -\frac{1}{4\al}\, \cE_{\eta^2}^{\om}(v_t^{\al})
  %   \,-\,
  %   65\, \al\, A(\phi)^2\, \Big(1 + \norm{\nabla \eta}{\infty}{E}^2\Big)\,
  %   |B|\,\Norm{v_t^{\al}}{2, B, \mu^{\om}}^2,
  % \end{align*}
  %
  where we used that $\av{\phi}\, \av{\phi^{-1}} = 1 - \frac{1}{4}(\nabla \phi)(\nabla \phi^{-1})$.  Hence, in view of \eqref{eq:phi:property}, there exists a constant $C_1 < \infty$ such that
  \begin{align} \label{eq:est_T12}
    T_1 + T_2
    \;\geq\;
    \frac{1}{2\al}\, \cE_{\eta^2}^{\om}(v_t^{\al}, v_t^{\al})
    \,-\,
    \frac{C_1}{2}\, \al\,
    \Big(\norm{\nabla \eta}{\infty}{E}^2 + h^{\om}(\phi) \Big)\,
    |B|\, \Norm{v_t^{\al}}{2, B, \mu^{\om}}^2.
  \end{align}
  In view of \eqref{eq:initial:step} the assertion follows.
\end{proof}
As an easy consequence we obtain now the analogue to \cite[Lemma 4.1]{ADS13a}.
\begin{corro} \label{cor:prep_moser}
  Under the assumptions of Lemma~\ref{lem:moser_pre} consider a function $\ze\!: \bbR \to \bbR$ satisfying $\supp \ze \subset I$, $0 \;\leq\; \ze \;\leq\; 1$ and $\ze(s_1) \;=\; 0$.  Then, for all $\al \geq 1$,
  \begin{align}\label{eq:DF}
    &\int_{I} \ze(t)\; \frac{\cE_{\eta^2}^{\om}(v_t^{\al})}{|B|} \, \md t
    \;\leq\;
    C_1 \al^2\,
    \Big( 
      \norm{\nabla \eta}{\infty}{\!E}^2 
      +
      \big\| \ze' \big\|_{\raisebox{-0ex}{$\scriptstyle L^{\raisebox{.1ex}{$\scriptscriptstyle \!\infty$}} (I)$}}
      +
      h^{\om}(\phi)
    \Big)\; \int_{I}\; \Norm{v_t^{2 \al}}{1, B, \mu^{\om}}\, \md t
    \nonumber\\[-.5ex]
  \end{align}
  \vspace{-4ex}

  \noindent and
  \begin{align}\label{eq:max}
    & \max_{t \in I}
    \Big( \ze(t)\; \Norm{(\eta\, v_t^{\al})^2}{1, B, \mu^{\om}} \Big)
    \nonumber\\[.5ex]
    &\mspace{36mu}
    \;\leq\;  
    C_1 \al^2\,
    \Big( 
      \norm{\nabla \eta}{\infty\!}{\!E}^2 
      +
      \big\| \ze' \big\|_{\raisebox{-0ex}{$\scriptstyle L^{\raisebox{.1ex}{$\scriptscriptstyle \!\infty$}} (I)$}}
      +
      h^{\om}(\phi)
    \Big)\; \int_{I}\; \Norm{v_t^{2 \al}}{1 , B, \mu^{\om} }\, \md t.
  \end{align}
\end{corro}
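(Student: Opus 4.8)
The plan is to obtain both estimates simply by integrating the differential inequality \eqref{eq:DF:ode} of Lemma~\ref{lem:moser_pre} against the temporal cut-off $\ze$. For brevity I would set $g(t) \ldef \Norm{(\eta\, v_t^{\al})^2}{1, B, \mu^{\om}}$, which is the same as $\Norm{\eta\, v_t^{\al}}{2,B,\mu^{\om}}^2$, and $\cE(t) \ldef |B|^{-1}\, \cE_{\eta^2}^{\om}(v_t^{\al})$, and $h(t) \ldef \Norm{v_t^{2\al}}{1, B, \mu^{\om}} = \Norm{v_t^{\al}}{2, B, \mu^{\om}}^2$. Two elementary facts are used throughout: first, $\cE(t) \geq 0$, since $\cE_{\eta^2}^{\om}$ is the Dirichlet form associated with the nonnegative weights $\av{\eta^2}\, \om$; second, since $0 \leq \eta \leq 1$ we have the pointwise bound $g(t) \leq h(t)$ for every $t \in I$. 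With $K \ldef C_1\, \al^2\, A(\phi)^2 \big(1 + \norm{\nabla \eta}{\infty}{E}^2\big)$, inequality \eqref{eq:DF:ode} reads $g'(t) + \cE(t) \leq K\, h(t)$ on $I$.

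Next I would multiply this by $\ze(t) \in [0,1]$ and use the product rule $\ze\, g' = (\ze\, g)' - \ze'\, g$ together with $\ze \leq 1$, $g \leq h$, and $|\ze'| \leq \bigl\|\ze'\bigr\|_{L^\infty(I)}$ to get, for a.e.\ $t \in I$,
\[
  (\ze\, g)'(t) \,+\, \ze(t)\, \cE(t)
  \;\leq\;
  \big(K + \bigl\|\ze'\bigr\|_{L^\infty(I)}\big)\, h(t) .
\]
This manipulation is legitimate because $\ze$ is Lipschitz, hence absolutely continuous, and $t \mapsto g(t)$ is absolutely continuous, so the fundamental theorem of calculus applies. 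For \eqref{eq:max} I would integrate the last display from $s_1$ to an arbitrary $t \in I$; since $\ze(s_1) = 0$ and both $\ze(t)\,\cE(t)$ and $\int_{s_1}^t \ze\, \cE\, \md s$ are nonnegative, dropping them gives $\ze(t)\, g(t) \leq \big(K + \bigl\|\ze'\bigr\|_{L^\infty(I)}\big) \int_{s_1}^t h(s)\, \md s \leq \big(K + \bigl\|\ze'\bigr\|_{L^\infty(I)}\big) \int_I h(s)\, \md s$, and taking the maximum over $t \in I$ yields \eqref{eq:max}. For \eqref{eq:DF} I would instead integrate over all of $I = [s_1, s_2]$: the boundary term at $s_1$ vanishes since $\ze(s_1)=0$, and the one at $s_2$ is nonnegative, so $\int_I \ze(t)\, \cE(t)\, \md t \leq \big(K + \bigl\|\ze'\bigr\|_{L^\infty(I)}\big) \int_I h(s)\, \md s$, which is \eqref{eq:DF}.

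Finally, in both inequalities the prefactor is $K + \bigl\|\ze'\bigr\|_{L^\infty(I)}$; since $\al \geq 1$, $A(\phi) \geq 1$ (as noted in the excerpt), and we may assume $C_1 \geq 1$, one has $K + \bigl\|\ze'\bigr\|_{L^\infty(I)} \leq C_1\, \al^2\, A(\phi)^2 \big(1 + \norm{\nabla\eta}{\infty}{E}^2 + \bigl\|\ze'\bigr\|_{L^\infty(I)}\big)$, so the constant in the assertion is the same $C_1$ as in Lemma~\ref{lem:moser_pre}. I do not expect any substantial difficulty here: the entire argument is an integration of \eqref{eq:DF:ode}, and the only points requiring a little care are the nonnegativity of $\cE_{\eta^2}^{\om}(v_t^{\al})$, the pointwise bound $g \leq h$ coming from $\eta \leq 1$, and the admissibility of discarding the remaining nonnegative boundary and integral terms when passing from the integrated identity to the stated estimates.
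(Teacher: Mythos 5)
Your proposal is correct and follows essentially the same route as the paper: multiply \eqref{eq:DF:ode} by $\ze$, use $\ze g' = (\ze g)' - \ze' g$ together with $\ze(s_1)=0$, $0\leq\ze\leq 1$, $\eta\leq 1$ and the nonnegativity of $\cE_{\eta^2}^{\om}$, integrate in time, and drop whichever nonnegative term is not needed to obtain \eqref{eq:DF} and \eqref{eq:max} respectively. The handling of the constant (absorbing $\|\ze'\|_{L^\infty(I)}$ using $\al\geq1$, $A(\phi)\geq1$) matches what the paper does implicitly.
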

\begin{proof}
  By multiplying both sides of \eqref{eq:DF:ode} with $\ze(t)$ and integrating the resulting inequality over $[s_1, s]$ for any $s \in I$, we get
  \begin{align}\label{eq:DF:int}
    &\ze(s)\, \Norm{(\eta\, v_s^{\al})^2}{1, B, \mu^{\om}}
    \,+\,
    \int_{s_1}^s\! \ze(t)\; \frac{\cE_{\eta^2}^{\om}(v_t^{\al})}{|B|}\, \md t
    \nonumber\\[.5ex]
    &\mspace{36mu}\leq\;
    C_1 \al^2\,
    \Big( 
      \norm{\nabla \eta}{\infty\!}{\!E}^2 
      +
      \big\|
        \ze'
      \big\|_{\raisebox{-0ex}{$\scriptstyle L^{\raisebox{.1ex}{$\scriptscriptstyle \!\infty$}} (I)$}}
      +
      h^{\om}(\phi)
    \Big)\; \int_{I}\; \Norm{v_t^{2 \al}}{1, B, \mu^{\om}}\, \md t.
  \end{align}
  Thus, by neglecting the first term on the left-hand side of \eqref{eq:DF:int}, \eqref{eq:DF} is immediate, whereas \eqref{eq:max} follows once we neglect the second term on the left-hand side of \eqref{eq:DF:int}.
\end{proof}
For any $x_0 \in V$, $\th \in (0,1)$ and $n \geq 1$, we write $Q(x_0, n) \equiv [0,  \th n^2] \times B(x_0, n)$.  Further, we consider a family of intervals $\{I_{\si} : \si \in [0, 1]\}$, i.e.\
\begin{align*}
  I_{\si}
  \;\ldef\;
  \Big[
     \big( 1 - \si \big) s',(1-\si) s''+\si \theta n^2
  \Big]
\end{align*}
interpolating between the intervals $[0, \th n^2]$ and $[s', s'']$, where $s' = \ve \theta n^2$ and  $s'' =(1- \ve) \th n^2$ for any fixed $\ve \in (0,1/4)$.  Moreover, set $ Q(x_0, \si n) \ldef I_{\si} \times B(x_0, \si n)$.  (This corresponds to the notation in \cite[Section 4]{ADS13a} with the choice $t_0 = 0$, but with an additional parameter $\th$.) In addition, for any sets $I$ and $B$ as in Lemma~\ref{lem:moser_pre}  let us introduce a $L^p$-norm on functions $u\!:\bbR \times V \to \bbR$ by
\begin{align*}
  \Norm{u}{p,I \times B, \mu^{\om}}
  \;\ldef\;
  \bigg(
    \frac{1}{|I|}\; \int_{I}\; \Norm{u_t}{p, B, \mu^{\om}}^p\; \md t
  \bigg)^{\!\!1/p},
\end{align*}
where $u_t=u(t,.)$, $t\in \bbR$. 
From now on we consider a function $\phi$ of the form
\begin{align} \label{def:phi_n}
  \phi(x)
  \;=\;
  \me^{\psi(x)},
  \qquad x \in V,
\end{align}
for some function $\psi$ satisfying $\psi, \psi^{-1} \in \ell^\infty(V)$ and $\nabla \psi \in \ell^\infty(E)$ to be chosen later. %Set $\la \ldef \|\nabla \psi\|_{\ell^{\infty}(E)}$.  % Then, in view of \eqref{eq:phi:property} above note that
% %
% %
% \begin{align}\label{eq:bound:phi_n}
%   A(\phi)
%   \;=\;
%   \frac{1}{2}\, \| \cosh(\nabla \psi) + 1 \|_\infty 
%   \;\leq\;
%   \cosh(\la). 
% \end{align}

The Moser iteration can now be carried out as in \cite[Proposition~4.2]{ADS13a}.
\begin{prop} \label{prop:mos_it}
  For a given $\phi $ as in \eqref{def:phi_n} let $v > 0$ be such that $\partial_t v - \cL_{\phi}^{\om} v = 0$ on $Q(x_0, n)$ for $n \geq 2 ( N_1(x_0) \vee N_2(x_0) )$.  Then, for any $p, q \in (1,\infty]$ with
  \begin{align*}
    \frac{1}{p} \,+\, \frac{1}{q} \;<\; \frac{2}{d'}
  \end{align*}
  there exists $C_2 \equiv C_2(d, p, q)$ such that for any $1/2 \leq \si' < \si \leq 1$, $\th \in (0,1)$ and $\ve \in (0, 1/4)$,
  \begin{align}\label{eq:mos_it}
    \max_{(t,x) \in Q(x_0, \si' n)} v(t,x)
    \;\leq\;
    C_2\,
    \bigg(
      \Big(
        1 + \th n^2 h^{\om}(\phi)
      \Big)\, \frac{m^{\om}(n)}{\ve \th (\si-\si')^2}
    \bigg)^{\!\ka}\,
    \Norm{v}{2, Q(x_0, \si n), \mu^{\om}},
  \end{align}
  where
  \begin{align*}
    2\ka
    \ldef
    1+\frac{(p-1)/p}{2/d'-(p^{-1}+q^{-1})},
    \mspace{24mu}
    m^{\om}(n)
    \ldef
    \big(1 \vee \Norm{\mu^{\om}}{p,B(x_0, n)}\big)
    \big(1 \vee \Norm{\nu^{\om}}{q,B(x_0, n)}\big).
  \end{align*}
\end{prop}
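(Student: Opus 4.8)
The plan is to run Moser's iteration exactly along the lines of \cite[Proposition~4.2]{ADS13a}, now feeding in the differential inequalities of Corollary~\ref{cor:prep_moser} in place of their unperturbed analogues. The only quantity appearing there which is not already $\om$-independent is $A(\phi)$, and by \eqref{eq:bound:phi_n} we have $A(\phi)\le\cosh(\la)$ throughout; tracking this factor, together with the extra summand $\th^{-1}$ produced by the time cut-off, is precisely what yields $F(n,\la,\th)=\cosh^2(\la)(n^2+\th^{-1})$. Note that $n\ge 2(N_1(x_0)\vee N_2(x_0))$ guarantees $\si n\ge n/2\ge N_1(x_0)\vee N_2(x_0)$ for all $\si\in[1/2,1]$, so Assumption~\ref{ass:graph} may be used on every ball $B(x_0,\si n)$ below.

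\emph{Step 1: a single integrability gain.} Fix $1/2\le\si''<\si'\le 1$ and choose a spatial cut-off $\eta$ with $\supp\eta\subset B(x_0,\si'n)$, $\eta\equiv1$ on $B(x_0,\si''n)$, $\eta\equiv0$ on $\partial B(x_0,\si'n)$, $0\le\eta\le1$, $\norm{\nabla\eta}{\infty}{E}\le 2/((\si'-\si'')n)$, together with a temporal cut-off $\ze$ with $\supp\ze\subset I_{\si'}$, $\ze\equiv1$ on $I_{\si''}$, $\ze$ vanishing at the left endpoint of $I_{\si'}$, $0\le\ze\le1$, $\|\ze'\|_{L^\infty(I_{\si'})}\le c_\ve/((\si'-\si'')\th n^2)$. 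Since $\si'-\si''\le1$ and $A(\phi)\le\cosh(\la)$, this gives
\[
  A(\phi)^2\,n^2\Big(1+\norm{\nabla\eta}{\infty}{E}^2+\|\ze'\|_{L^\infty(I_{\si'})}\Big)
  \;\le\;\frac{c_\ve\,F(n,\la,\th)}{(\si'-\si'')^2}.
\]
On the other hand, applying the local Sobolev inequality $(S^1_{d'})$ to $w^2$ (for $w$ supported in $B(x_0,\si'n)$), bounding $|w(y)^2-w(z)^2|\le|w(y)-w(z)|(|w(y)|+|w(z)|)$, using the Cauchy--Schwarz inequality on $E$ (which produces $\cE^\om(w)^{1/2}$ and a factor carrying $\nu^\om$) and then H\"older's inequality on $B(x_0,\si'n)$ with exponents tied to $p$ and $q$ (which produces $\Norm{\mu^\om}{p,\cdot}$ and $\Norm{\nu^\om}{q,\cdot}$), one obtains, as in \cite[Section~3]{ADS13a} and after using volume regularity to pass from $\si'n$-balls to $n$-balls, a gain exponent $\rho=\rho(d',p,q)>1$ — the strict inequality $\rho>1$ being equivalent to \eqref{eq:cond:pq} — and a constant $C=C(d,p,q)$ such that, for every $w\colon V\to\bbR$ with $\supp w\subset B(x_0,\si'n)$,
\[
  \Norm{w}{2\rho,\,B(x_0,\si'n),\,\mu^\om}^2
  \;\le\;C\,m_{\mu,\nu}(n)\Big(\tfrac{n^2\,\cE^\om(w)}{|B(x_0,\si'n)|}+\Norm{w}{2,\,B(x_0,\si'n),\,\mu^\om}^2\Big).
\]
Applying this to $w=\eta v_t^\al$ (so that the energy becomes $\cE_{\eta^2}^\om(v_t^\al)$ up to lower-order terms already absorbed in Corollary~\ref{cor:prep_moser}), and combining it with the two bounds \eqref{eq:DF} and \eqref{eq:max} via the standard parabolic interpolation between the space-time $L^2$-norm and $\max_{t\in I_{\si'}}\big(\ze(t)\Norm{(\eta v_t^\al)^2}{1,\,B(x_0,\si'n),\,\mu^\om}\big)$ — exactly as in \cite[Proof of Proposition~4.2]{ADS13a} — together with the first display above, yields a constant $C_3=C_3(d,p,q,\ve)$ such that
\[
  \Norm{v}{2\al\rho,\,Q(x_0,\si''n),\,\mu^\om}
  \;\le\;\Big(C_3\,\al^2\,\tfrac{m_{\mu,\nu}(n)\,F(n,\la,\th)}{(\si'-\si'')^2}\Big)^{1/(2\al)}\Norm{v}{2\al,\,Q(x_0,\si'n),\,\mu^\om}
\]
for all $\al\ge1$ and all $1/2\le\si''<\si'\le1$; by interpolation with the $L^2$-norm the same holds with $\rho$ replaced by any $\rho'\in(1,\rho]$.

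\emph{Step 2: iteration.} Given $\be\ge1$, let $K\ge0$ be minimal with $\rho^{K+1}>\be$, put $\al_j\ldef\rho^j$ and $\si_j\ldef\si'+2^{-j}(\si-\si')$ for $0\le j\le K$, and $\al_{K+1}\ldef\be$, $\si_{K+1}\ldef\si'$. Iterating the estimate of Step~1 along these sequences (with $\rho'=\be/\rho^K\in(1,\rho]$ in the final step), the gaps satisfy $\si_j-\si_{j+1}\ge 2^{-(j+1)}(\si-\si')$, so the accumulated constant is $\prod_{j\ge0}(C_3\rho^{2j}4^{j+1})^{1/(2\rho^j)}<\infty$, which converges to some $C_2=C_2(d,p,q,\ve)$; since $m_{\mu,\nu}(n)F(n,\la,\th)/(\si-\si')^2\ge1$ and $\sum_{j=0}^{K}1/(2\al_j)\le\sum_{j\ge0}1/(2\rho^j)=\rho/(2(\rho-1))$, and a direct computation shows this last number equals the constant $\ka$ of the statement, one arrives at \eqref{eq:mos_it}.

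\emph{The main obstacle.} The only genuinely substantial ingredient is the Sobolev inequality invoked in Step~1 — in particular that it holds with a gain exponent $\rho>1$ exactly under \eqref{eq:cond:pq} and with the stated $m_{\mu,\nu}(n)$-dependence — but this is already available from \cite{ADS13a} and is insensitive to the perturbation. Within the present argument the only new work is the propagation of $A(\phi)^2$ through the differential inequalities (essentially handled in Lemma~\ref{lem:moser_pre} and Corollary~\ref{cor:prep_moser} via $A(\phi)\le\cosh(\la)$) and the bookkeeping verifying that the time cut-off contributes exactly the $\th^{-1}$ in $F(n,\la,\th)$; both are routine.
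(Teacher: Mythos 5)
Your outline follows the paper's route (feed the perturbed energy/max bounds of Corollary~\ref{cor:prep_moser} into the Sobolev inequality of \cite{ADS13,ADS13a} and iterate, tracking $A(\phi)\le\cosh(\la)$ and the extra $\th^{-1}$), and much of the bookkeeping (the bound on the cut-offs, $m_{\mu,\nu}(n)F(n,\la,\th)/(\si-\si')^2\ge 1$, convergence of the product of step constants) is fine. The genuine gap is the one-step estimate of your Step~1. From \eqref{eq:DF} and \eqref{eq:max} you control only $\int_I \Norm{(\eta v_t^{\al})^2}{\rho,B,\mu^{\om}}\,\md t$ to the \emph{first} power (via the Sobolev inequality applied to the integrated energy) and $\max_{t}\Norm{(\eta v_t^{\al})^2}{1,B,\mu^{\om}}$. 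The standard parabolic interpolation $\Norm{f}{r}\le \Norm{f}{1}^{1-\vt}\Norm{f}{\rho}^{\vt}$ must be used with $\vt r=1$ so that only the first power of the $\rho$-norm appears after integrating in time, and this forces $r=1+1/\rho_*=2-1/\rho$. Hence the achievable gain per step in the space-time norm is $2-1/\rho$ (in your weighted notation), not $\rho$: your claimed inequality $\Norm{v}{2\al\rho,Q(x_0,\si''n),\mu^{\om}}\le(\cdots)^{1/(2\al)}\Norm{v}{2\al,Q(x_0,\si'n),\mu^{\om}}$ would require a bound on $\int_I\Norm{(\eta v_t^{\al})^2}{\rho,B,\mu^{\om}}^{\rho}\,\md t$, i.e.\ essentially a sup-in-time energy estimate, which is not available. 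This is precisely why the paper iterates with $\al_k=(1+(\rho-p_*)/\rho)^k$ (per-step gain $2-p_*/\rho$) rather than with powers of the Sobolev exponent.

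Moreover, the overstated gain is load-bearing for the exponent $\ka$. With the correct gain $\theta=2-1/\rho$ and steps counted from $k=0$, the accumulated exponent is $\tfrac12\sum_{k\ge0}\theta^{-k}=\ka+\tfrac12$, which (since the bracket is $\ge1$) is strictly weaker than \eqref{eq:mos_it}; the stated $\ka=\tfrac12\sum_{k\ge1}1/\al_k$ is reached in the paper only by the additional device of starting the iteration at $k=1$ (possible because $\al_1<2$), i.e.\ by treating the first step separately — an ingredient your proposal does not contain. Your identity $\sum_{j\ge0}\rho^{-j}/2=\rho/(2(\rho-1))=\ka$ comes out right only because of the unjustified per-step gain. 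A secondary point: the compressed ``weighted Sobolev inequality'' you state, with $\mu^{\om}$-weighted norms on both sides and additive term $\Norm{w}{2,B,\mu^{\om}}^2$, is not the inequality available from \cite[Proposition~3.5]{ADS13}; that inequality has the unweighted $\Norm{w^2}{\rho,B}$ on the left and an additive term of the form $\Norm{\mu^{\om}}{p,B}\Norm{w^2}{p_*,B}$, and replacing $\Norm{w^2}{p_*,B}$ by $\Norm{w}{2,B,\mu^{\om}}^2$ goes the wrong way in H\"older's inequality, so your version would itself need a proof (or the iteration should be run with the mixed weighted/unweighted norms, as in \cite{ADS13a}).
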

\begin{proof}
  We proceed as in the proof of \cite[Proposition~4.2]{ADS13a}, which is based on the Sobolev inequality in \cite[Proposition~3.5]{ADS13}. In order to lighten notation, we set $B(n) \equiv B(x_0,n)$.  Consider a sequence $\{B(\si_k n)\}_k$ of balls with radius $\si_k n$ centered at $x_0$, where
  \begin{align*}
    \si_k \;=\; \si' + 2^{-k} (\si - \si')
    \qquad \text{and} \qquad
    \tau_k \;=\; 2^{-k-1} (\si - \si'),
    \quad k = 0, 1, \ldots
  \end{align*}
  For every $k$ such that $\tau_k n \geq 1$, let $\{\eta_k\}_k$ be a sequence of cut-off functions in space with the following properties: $\supp \eta_k \subset B(\si_k n)$, $\eta_k \equiv 1$ on $B(\si_{k+1} n)$, $\eta_k \equiv 0$ on $\partial B(\si_k n)$ and $\norm{\nabla \eta_k}{\infty}{E} \leq 1/(\tau_{k} n)$.  Further, let $\{\ze_k\}_k$ be a sequence of smooth cut-off functions in time, that is $\ze_k \in C^{\infty}(\bbR)$, $\supp \ze_k \subset I_{\si_k}$, $\ze_k \equiv 1$ on $I_{\si_{k+1}}$, $\ze_k( (1-\si_k)s') = 0$ and $\| \ze_k' \|_{\raisebox{-0ex}{$\scriptstyle L^{\raisebox{.1ex}{$\scriptscriptstyle \!\infty$}} ([0,\theta n^2])$}} \leq  1 /  (\ve \tau_k \th n^2)$.  Finally set $\al_k = (1 + (\rho - p_*) / \rho )^k$ with $\rho = d'/\big(d'-2 + d'/q \big)$ from the Sobolev inequality in \cite{ADS13}.  Since $1/p + 1/q < 2/d'$ we have $\rho > p_*$ and therefore $\al_k \geq 1$ for every $k \in \bbN_0$.  Now with these choices the equations \eqref{eq:DF} and \eqref{eq:max} become
  \begin{align*}
    \int_{I_{\si_{k+1}}} 
    \frac{\cE_{\eta_k^2}^{\om}(v_t^{\al_k})}{|B(\si_k n)|} \, \md t 
    \;\leq\;
    c \, \frac{\al_k^2}{n^2}
    \left(\frac{1}{\ve \th \tau_k^2} + n^2 h^{\om}(\phi) \right)\,
    \int_{I_{\si_k}}\! \Norm{v_t^{2 \al_k}}{1,B(\si_k n), \mu^{\om}}\, \md t
  \end{align*}
  and
  \begin{align*}
    \max_{t \in I_{\si_{k+1}}}
    \Norm{(\eta\, v_t^{\al_k})^2}{1, B(\si_k n), \mu^{\om}} 
    \;\leq\;
    c\, \frac{\al_k^2}{n^2}
    \left(\frac{1}{\ve \th \tau_k^2} + n^2 h^{\om}(\phi) \right)\,
    \int_{I_{\si_k}}\! \Norm{v_t^{2 \al_k}}{1,B(\si_k n), \mu^{\om}}\, \md t.
  \end{align*}
  The claim follows now line by line from the Moser iteration in the proof of \cite[Proposition~4.2]{ADS13a} with $\ka = \frac{1}{2} \sum_{k=0}^\infty 1/\al_k$.
\end{proof}
Finally, together with the apriori estimate, we deduce the following maximal inequality for $v$. 
\begin{corro} \label{cor:max_predual}
  Suppose that the assumptions of the Proposition~\ref{prop:mos_it} are satisfied.  Then, there exists $C_3 = C_3(d', p, q, \ve)$ such that
  \begin{align*}
    \max_{(t,x) \in Q(x_0, n/2)} v(t,x)
    \;\leq\;
    \frac{C_3}{n^{d/2}}\, \bigg(\frac{m^{\om}(n)}{\ve\th}\bigg)^{\!\!\ka}\,
    \me^{2 h^{\om}(\phi) (1-\ve) \th n^2}\,
    \norm{\phi f}{2}{V, \mu^{\om}}.
  \end{align*}
\end{corro}
\begin{proof}
  Choosing $\si' = 1/2$ and $\si = 1$ we combine Proposition~\ref{prop:mos_it} with the a-priori estimate in \eqref{eq:hke:apriori} to obtain
  \begin{align*}
    \max_{(t,x) \in Q(x_0, n/2)} v(t,x)
    \;\leq\;
    c\,
    \bigg(
      \Big(
        1 + \th n^2 h^{\om}(\phi)
      \Big)\, \frac{m^{\om}(n)}{\ve \th}
    \bigg)^{\!\ka}\,
    \me^{h^{\om}(\phi) \th n^2}\, n^{-d/2}\, \norm{\phi f}{2}{V, \mu^{\om}}.
  \end{align*}
  Since for any $\ve \in (0, 1/2)$ there exists $c(\ve) < \infty$ such that for all $n \geq 1$ and $\th \in (0,1]$
  \begin{align*}
    \Big( 1 + \th n^2 h^{\om}(\phi) \Big)^{\!\ka}\,
    \me^{-(1-2\ve)h^{\om}(\phi) \th n^2}
    \;\leq\;
    c(\ve)
    \;<\;
    \infty,
  \end{align*}
  and the claim follows.
\end{proof}

\subsection{Heat kernel bounds}
We now return to the Cauchy problem \eqref{eq:cauchy_prob}.  
\begin{prop} \label{prop:est_cp}
  Suppose that Assumption~\ref{ass:lim_const} holds and let $x_0 \in V$ be fixed. Then,  for any given $x \in V$ and $t$ with $\sqrt{t} \geq N(x_0) \vee 2(N_1(x_0) \vee N_2(x_0))$ the solution $u$ of the Cauchy problem in \eqref{eq:cauchy_prob} satisfies
  \begin{align*}
    |u(t,x)|
    \;\leq\;
    C_4\,t^{-d/2}
    \sum_{y\in V}\! \bigg( \!1 + \frac{d(x_0,x)}{\sqrt{t}}\bigg)^{\!\!\ga}
    \bigg( \!1 + \frac{d(x_0,y)}{\sqrt{t}}\bigg)^{\!\!\ga}
    \me^{\psi(y)-\psi(x)+2h^{\om}(\phi) t}\, f(y)\, \mu^{\om}(y)
  \end{align*}
  with $\ga \ldef 2\ka - d/2$ and $C_4 = C_4(d, p, q, \bar{\mu}_p, \bar{\nu}_q)$.
\end{prop}
\begin{proof}
  We will mainly follow the proof of Theorem 1.1 in \cite{Zi13}. Recall that $\ve \in (0, 1/4)$ is arbitrary, so we choose now for instance $\ve \ldef 1/8$.  By Assumption~\ref{ass:lim_const} we have
  \begin{align*}
    m^{\om}(n)
    \;\leq\;
    4 \big( 1 \vee \bar \mu_p \big) \big( 1\vee \bar \nu_q \big)
    \qquad \text{for all $n \geq N(x_0)$.}
  \end{align*}
 Next, for any given $x \in V$ and $t$ as in the statement we choose $n$ and $\theta$ in such a way that $(t,x) \in Q(x_0, n/2)$ (for this purpose we need the additional parameter $\theta$).  We set 
  \begin{align*}
    n
    \;=\;
    \big\lceil 2d(x_0,x) +  \sqrt{8t/7}\, \big\rceil
    \;\geq\;
    N(x_0) \vee 2 \big( N_1(x_0) \vee N_2(x_0) \big) 
  \end{align*}
  and $\th \ldef t/\frac{7}{8} n^2$ so that $t = \frac{7}{8} \th n^2 = s''$ and $(t,x) \in Q(x_0, n/2)$.  Then, Corollary~\ref{cor:max_predual} implies that
  \begin{align*}
    \me^{\psi(x)} u(t,x)
    \;\leq\;
    c\,  t^{-\ka}\, \me^{2h^{\om}(\phi) t}\, n^{\ga}\,
    \norm{\me^{\psi} f}{2}{V, \mu^{\om}}.
  \end{align*}
  This can be rewritten as
  \begin{align}\label{eq:L-infinity-2}
    \norm{ b^{-1}(t, \cdot)\,
    P_t^{\psi}\big(\me^\psi f\big) }{\infty}{V, \mu^\om}
    \;\leq\;
    c\, r(t)\, \norm{\me^{\psi} f}{2}{V,\mu^\om},
  \end{align}
  where $P_t^{\psi} g \ldef \me^{\psi}\, P_t(\me^{-\psi} g)$ and 
  \begin{align*}
    r(t)
    \;\rdef\;
    t^{-\ka}\, \me^{2 h^{\om}(\phi) t},
    \qquad
    b(t,x)
    \;\ldef\;
    \bigg( 2\, d(x_0,x) +  \sqrt{\tfrac 8 7 t }\, \bigg)^{\!\!\ga}.
  \end{align*}
  Notice that $P_t^{-\psi}$ is the adjoint of $P_t^{\psi}$ in $\ell^2(V, \mu^{\om})$.  Since $h(\phi)$ remains unchanged if we replace $\psi$ by $-\psi$, \eqref{eq:L-infinity-2} also holds true for $\psi$ replaced by $-\psi$.  Therefore, we get by duality that for all $g \in \ell^1(V, \mu^{\om})$,
  \begin{align}\label{eq:L-2-1}
    \norm{ P_t^{\psi} \big( b^{-1}(t, \cdot) g \big)}{2}{V, \mu^\om}
    \;\leq\;
    c \, r(t)\,  \norm{g}{1}{V, \mu^{\om}}.
  \end{align}
  Since $b(t/2,x) \leq b(t,x)$,
  \begin{align*}
    \norm{b^{-1}(t,\cdot)\, \me^{\psi} P_t f }{\infty}{V,\mu^\om}
    &\;\leq\;
    \norm{b^{-1}(t/2, \cdot)\, \me^{\psi} P_{t/2}\big( P_{t/2}f \big) }
    {\infty}{V,\mu^\om}
    \\[.5ex]
    &\overset{\eqref{eq:L-infinity-2}}{\;\leq\;}
    c\, r(t/2)\, \norm{ \me^{\psi}\, P_{t/2}f }{2}{V,\mu^\om}
    \\[.5ex]
    &\overset{\eqref{eq:L-2-1}}{\;\leq\;}
    c^2\, r(t/2)^2\, \norm{ \me^{\psi}\, b(t/2, \cdot) f}{1}{V,\mu^\om}.
  \end{align*}
  Hence,
  \begin{align*}
    |u(t,x)|
    \;\leq\;
    \frac{c}{t^{2\ka}}\, \me^{2 h^{\om}(\phi) t -\psi(x)}\,
    \Big( d(x_0,x) +  \sqrt{t}\Big)^{\!\ga}
    \sum_{y\in V} \Big( d(x_0,y) +  \sqrt{t} \Big)^{\!\ga}\,
    \me^{\psi(y)} f(y)\, \mu^\om(y),
  \end{align*}
  which is the claim since $\ga = 2 \ka - d/2$.
\end{proof}
\begin{proof}[Proof of Theorem~\ref{thm:hke}]
  We apply Proposition~\ref{prop:est_cp} on  the heat kernel $q^{\om}(t, x, y)$, that is $f = \indicator_{\{y\}}/\mu^{\om}(y)$, which yields
  \begin{align*}
    q^{\om}(t, x, y)
    \;\leq\;
    C_4\, t^{-d/2}\,
    \bigg( 1 + \frac{d(x_0,x)}{\sqrt t} \bigg)^{\!\!\ga}
    \bigg( 1 + \frac{d(x_0,y)}{\sqrt t} \bigg)^{\!\!\ga}\,
    \me^{\psi(y)-\psi(x) + 2h^{\om}(\phi) t}. 
  \end{align*}
  We now optimize over $\phi = \me^{\psi}$.  Let
  \begin{align*}
    \psi(u)
    \;\ldef\;
    - \la\, \min \big\{ d(x,u), d(x,y) \big\}.
  \end{align*}
  Since $h^{\om}(\phi) \leq 2 (\cosh(\la) - 1)$ this gives
  \begin{align*}
    \exp\!\big(\psi(y) - \psi(x) + 2h(\phi) t\big)
    &\;\leq\;
    \exp\!\Big( -\la\, d(x,y) + 2\,t\, \big( \me^\la + \me^{-\la} - 2 \big)\Big)
    \\[.5ex]
    &\;=\;
    \exp\!\bigg(
      d(x,y) \Big( -\la + \frac{2t}{d(x,y)}\, \big( e^\la+e^{-\la}-2 \big)\Big)
    \bigg).
  \end{align*}
  So if
  \begin{align*}
    F(s)
    \;=\;
    \inf_{\la >0} \Big( -\la + (2s)^{-1} \big( e^\la+e^{-\la}-2 \big) \Big),
  \end{align*}
  then
  \begin{align} \label{eq:q_estF}
    q^\om(t,x, y)
    \;\leq\;
    c\, t^{-d/2}\, \bigg( 1 + \frac{d(x_0,x)}{\sqrt t} \bigg)^{\!\!\ga}
    \bigg( 1 + \frac{d(x_0,y)}{\sqrt t} \bigg)^{\!\!\ga}\,
    \exp\!\bigg( d(x,y)\, F\bigg(\frac{d(x,y)}{4t}\bigg) \bigg).
  \end{align}
  We have 
  \begin{align*}
    F(s)
    \;=\;
    s^{-1} \big( (1 + s^s)^{1/2} - 1 \big)
    \,-\, \log\big( s+(1+s^2)^{1/2}\big)
  \end{align*}
  and also $F(s)\leq -s/2(1-s^2/10)$ for $s>0$ (see \cite{BD10} and \cite[page 70]{Da93}).  Hence, if $s\leq 3$, then $F(s)\leq -s/20$ while if $s\geq \me$, then
  \begin{align*}
    F(s)
    \;\leq\;
    1 - \log(2s)
    \;=\;
    -\log(2s/e).
  \end{align*}
  Now, by choosing $x = x_0$ and substituting in \eqref{eq:q_estF} we obtain that there exist suitable constants $c_1, \ldots, c_4$ such that if $d(x_0,y)\leq c_1 t$ then
  \begin{align*}
    q^\om(t,x_0, y)
    \;\leq\;
    c_2\, t^{-d/2}\,
    \bigg( 1 + \frac{d(x_0,y)}{\sqrt t} \bigg)^{\!\!\ga}\,
    \exp\!\bigg( \!\!-\! 2c_3\, \frac{d(x_0,y)^2}{t}\bigg)
  \end{align*}
  and if $d(x_0,y) \geq c_1 t$ then
  \begin{align*}
    q^\om(t, x_0, y)
    \;\leq\;
    c_2\, t^{-d/2}\,
    \bigg( 1 + \frac{d(x_0,y)}{\sqrt t}\bigg)^{\!\!\ga}\,
    \exp\!\bigg(%
      \!-\!2 c_4\, d(x_0,y)\, \bigg( 1 \vee \log\frac{d(x_0,y)}{t} \bigg)
    \bigg).
  \end{align*}
  Finally, since
  \begin{align*}
    \bigg( 1 + \frac{d(x_0,y)}{\sqrt{t}} \bigg)^{\!\!\ga}\,
    \exp\!\bigg( \!\!-\!c_3\, \frac{d(x_0,y)^2}{t} \bigg)
    \;\leq\;
    \sup_{z \geq 0}\, (1+z)^{\ga}\, \me^{-c_3 z^2}
    \;\leq\;
    c,
  \end{align*}
  and
  \begin{align*}
    &\bigg( 1 + \frac{d(x_0,y)}{\sqrt{t}} \bigg)^{\!\!\ga}\,
    \exp\!\bigg(%
      \!\!-\! c_4\, d(x_0,y)\, \bigg( 1 \vee \log\frac{d(x_0,y)}{t} \bigg)
    \bigg)
    \\[1ex]
    &\mspace{36mu}\leq\;
    \big( 1 + d(x_0,y) \big)^{\ga}\,
    \exp\!\big( \!-\! c_4 d(x_0,y) \big)
    \;\leq\;
    \sup_{z \geq 0}\, (1+z)^{\ga}\, \me^{-c_4 z}
    \;\leq\;
    c,
  \end{align*}
  after adapting the constant $c_2$ we obtain the result.
\end{proof}

\section{Gaussian upper bounds for the VSRW} \label{sec:GBvsrw}
Theorem~\ref{thm:hke_VSRW} can be proven essentially along the lines of the proof of Theorem~\ref{thm:hke} above. For the reader's convenience we explain the main adjustments in this section. 

For any non-empty, finite $B \subset V$, any interval $I\subset \bbR$ and $p,q \in [1, \infty)$, we introduce a space-averaged and a space-time averaged norm on functions $f\!: B \to \bbR$ and $u\!: I \times B \to \bbR$, respectively,  by
\begin{align*}
  \Norm{f}{p,B}
  \;\ldef\;
  \bigg(
    \frac{1}{|B|}\, \sum_{x \in B}  |f(x)|^p
  \bigg)^{\!\!1/p}
  \;\quad \text{and} \qquad
  \Norm{u}{p, q, I \times B}
  \;\ldef\;
  \bigg(
    \frac{1}{|I|}\; \int_{I}\; \Norm{u_t}{p, B}^q\; \md t
  \bigg)^{\!\!1/q}\mspace{-18mu},
\end{align*}
where $u_t = u(t,.)$, $t\in \bbR$. 
Consider now the Cauchy problem for the operator $\cL^{\om}_{\mathrm{V}}$,
\begin{align}\label{eq:cauchy_prob_V}
   \left\{
    \begin{array}{rcl}
      \partial_t u - \cL^{\om}_{\mathrm{V}} u
      &\mspace{-5mu}=\mspace{-5mu}& 0,
      \\[1ex]
      u(t=0, \,\cdot\,)
      &\mspace{-5mu}=\mspace{-5mu}& f.
    \end{array}
  \right.
\end{align}
With a slight abuse of notation, for any positive function $\phi$ on $V$ such that $\phi, \phi^{-1} \in \ell^{\infty}(V)$ let  $(\cL_{\phi}^{\om}\, g)(x) \ldef \phi(x) (\cL^{\om}_{\mathrm{V}} \phi^{-1} g)(x)$ acting on bounded functions $g\!: V \to \bbR$.
The a-priori estimate in \eqref{eq:hke:apriori} now reads as follows.
\begin{lemma}\label{lemma:hke:apriori_V}
  Suppose that $f \in \ell^2(V)$ and $u$ solves the Cauchy problem \eqref{eq:cauchy_prob_V}.  Set $v(t,x) \ldef \phi(x) u(t,x)$ for a positive function $\phi$ on $V$ such that $\phi, \phi^{-1} \in \ell^{\infty}(V)$.  Then
  \begin{align}\label{eq:hke:apriori_V}
    \norm{v(t, \,\cdot\,)}{2}{V}
    \;\leq\;
    \me^{\tilde{h}^{\om}(\phi) t}\, \norm{\phi f}{2}{V},
  \end{align}
  where
  \begin{align*}
   \tilde{h}^{\om}(\phi)
    \;\ldef\;
    \max_{x \in V}\,
    \sum_{y \sim x}\, \big| \md\Ga^{\om}(\phi, \phi^{-1})(\{x,y\}) \big|.
    % \max_{e \in E} \big(1\vee \om(e)\big)
    % \bigg(
    %   \frac{\phi(e^+)}{\phi(e^-)} \,+\, \frac{\phi(e^-)}{\phi(e^+)} \,-\, 2
    % \bigg).
  \end{align*}
\end{lemma}
\begin{proof}
  This can be proven similarly as Lemma~\ref{lem:apriori}.
\end{proof}
\begin{lemma} \label{lem:prep_moserV}
 Let $Q$, $\eta$, $\ze$ and $\phi$ be as in Lemma~\ref{lem:moser_pre} and Corollary~\ref{cor:prep_moser}, and let $v_t\geq 0$ satisfy $\partial_t v-\cL^{\om}_{\phi} v \leq 0$.  Then, there exists $C_5 < \infty$ such that for all $\al \geq 1$ and $p \geq 1$,
  \begin{align}\label{eq:DFV}
    &\max_{t \in I} \Big( \ze(t)\; \Norm{(\eta\, v_t^{\al})^2}{1, B} \Big)
    \,+\, \int_{I} \ze(t)\; \frac{\cE_{\eta^2}^{\om}(v_t^{\al})}{|B|} \, \md t
    \nonumber\\[.5ex]
    &\mspace{36mu}
    \;\leq\;
    C_6\, \al^2\, \Big( 1 \vee \Norm{\mu^{\om}}{p, B} \Big)
    \Big( 
      \norm{\nabla \eta}{\infty\!}{\!E}^2
      +
      \big\| \ze' \big\|_{\raisebox{-0ex}{$\scriptstyle L^{\raisebox{.1ex}{$\scriptscriptstyle \!\infty$}} (I)$}}
      + \tilde{h}^{\om}(\phi)
    \Big)\, \int_{I} \Norm{v_t^{2 \al}}{p_* , B}\, \md t,
    \nonumber\\
  \end{align}
  where $p_*:=p/(p-1)$.
\end{lemma}
\begin{proof}
  Since $\partial_t v - \cL^{\om}_{\phi} v \leq 0$ on $Q$, we have for every $t \in I$ and $\al \geq 1$
  \begin{align*}
    \frac{1}{2 \al}\,
    \frac{\md}{\md t}\, \norm{(\eta v_t^{\al})}{2}{V}^2
    \;\leq\;
    \scpr{\eta^2 v_t^{2 \al-1}}{\cL_{\phi}^{\om} v_t}{V}
    \;=\;
    - \scpr{\nabla (\eta^2 \phi\, v_t^{2 \al-1})}
    {\om \nabla (\phi^{-1} v_t)}{E}.
  \end{align*}
  By following line by line the arguments in the proof of Lemma~\ref{lem:moser_pre} and by applying H\"older's inequality on the resulting term $\|v_t^{2\al}\|_{1,B,\mu^\om}$ we obtain
  \begin{align*}
    \frac{\md}{\md t}\, \Norm{(\eta v_t^{\al})}{2, B}^2
    \,+\,
    \frac{\cE_{\eta^2}^{\om}(v_t^{\al})}{|B|}
    \leq
    c \al^2\, \Norm{\mu^\om}{p,B}\,
    \Big( \norm{\nabla \eta}{\infty}{E}^2 + \tilde{h}^{\om}(\phi)\Big)\,
    \Norm{v_t^{2\al}}{p_*, B}.
  \end{align*}
  By multiplying both sides  with $\ze(t)$ and integrating over $[s_1, s]$ for any $s \in I$, we get
  \begin{align*}
    \ze(s)\, \Norm{(\eta\, v_s^{\al})^2}{1, B}
    &\,+\,
    \int_{s_1}^s\! \ze(t)\; \frac{\cE_{\eta^2}^{\om}(v_t^{\al})}{|B|}\, \md t
    \,-\,
    \big\|
        \ze'
      \big\|_{\raisebox{-0ex}{$\scriptstyle L^{\raisebox{.1ex}{$\scriptscriptstyle \!\infty$}} (I)$}}
    \, \int_{I}\; \Norm{v_t^{2 \al}}{1, B}\, \md t
    \nonumber\\[1ex]
    &\mspace{36mu}
    \leq\; 
    c \al^2\, \Norm{\mu^\om}{p,B}\,
    \Big(\norm{\nabla \eta}{\infty\!}{\!E}^2 + \tilde{h}^{\om}(\phi)\Big)\,
    \int_{I}\; \Norm{v_t^{2 \al}}{p_*, B}\, \md t.
  \end{align*}
  Since by Jensen's inequality $\|v_t^{2 \al}\|_{1, B} \leq \|v_t^{2 \al}\|_{p_*, B}$ for every $t$, the claim follows.
\end{proof}
%
% \textcolor{red}{***** Revise the remaining part *****}
% From now on we will again consider a function $\phi$ of the form $\me^\psi$.  Set
% %
% \begin{align*}
%   \la_1
%   \;=\;
%   \la_1(\om)
%   \;\ldef\;
%   \max_{e \in E} \big( 1\vee \om(e) \big)^{1/2} \big| \nabla \psi(e) \big|. 
% \end{align*}
% %
% Then, since $\cosh(x)\geq 1+\frac  12  x^2$ for all $x\in \bbR$ we have 
% %
% \begin{align} \label{eq:cond_phi}
%   \tilde{h}_{\om}(\phi) 
%   \;=\;
%   2\, \max_{e\in E}
%   \Big\{ \big(1\vee \om(e)\big)  \big( \cosh(\nabla \psi) -1 \big) \Big\}
%   \;\geq\;
%   \la_1^2.
% \end{align}  
% %
% Similarly as in the previous section note that by monotonicity
% %
% \begin{align} \label{eq:bound:phiV}  
%   % h(\phi^2)
%   % \;\leq \;
%   % 4 \cosh^2(\la_1), \qquad 
%   A(\phi)
%   \;\leq\;
%   \cosh(\la_1). 
% \end{align}
% %
% For any $x_0 \in V$, $\th \in (0,1)$ and $n \geq 1$ let  $\{ I_\si: \si\in [0,1]\}$  still be defined  as below Corollary~\ref{cor:prep_moser}. Further, we set $\tilde Q(x_0, \si n) \ldef I_\si \times \tilde B(x_0, \si n)$.
By Moser iteration we obtain the following maximal inequality for perturbed solutions of the Cauchy problem \eqref{eq:cauchy_prob_V}.
\begin{prop}
  Let $v > 0$ be such that $\partial_t v - \cL_{\phi}^{\om} v = 0$ on $Q(x_0, n)$ for any $n \geq 2 ( N_1(x_0) \vee N_2(x_0) )$.  Then, for any $\th \in (0,1)$, $\ve \in (0, 1/4)$ and $p, q \in (1,\infty]$ with
  \begin{align*}
    \frac{1}{p} \,+\, \frac{1}{q} \;<\; \frac{2}{d'},
  \end{align*}
  there exists $C_7 \equiv C_7(d', p, q, \ve)$ and $\ka = \ka(d', p, q)$ such that
  \begin{align} \label{eq:max_ineqV}
    \max_{(t,x) \in Q(x_0, n/2)} v(t,x)
    \;\leq\;
    \frac{C_7}{n^{d/2}}\,
    \bigg(
      \frac{m^{\om}(n)}{\ve \th}
    \bigg)^{\!\!\ka}\,
    \me^{2\tilde{h}^{\om}(\phi) (1-\ve) \th n^2}\,
    \norm{\phi f}{2}{V}.
  \end{align}
\end{prop}
\begin{proof}
  As in the proof of Proposition~\ref{prop:mos_it} above we will basically follow the arguments of \cite[Proposition 4.2]{ADS13a}.  Set $\al = 1/p_* + 1/\rho_*$ and $\al_k = \al^k$, where  $\rho_*$ is the H\"older-conjugate of $\rho = d'/(d'-2+d'/q)$.  Note that for any $p, q \in (1, \infty]$ for which $1/p + 1/q < 2/d'$ is satisfied, $\al = 1/p_* + 1/\rho_* > 1$ and therefore $\al_k \geq 1$ for every $k \in \bbN_0$.  Moreover, for some $1/2 \leq \si' < \si \leq 1$, let $\si_k$ and $\tau_k$ be defined as in the proof of Proposition~\ref{prop:mos_it}.

  Consider first the case $\tau_k n \geq 1$ (i.e.\ $B(x_0, \si_{k+1} n) \subsetneq B(x_0, \si_k n)$), and choose the same cut-off function $\{\eta_k\}_k$ and $\{\ze_k\}_k$ as before.  Now, by H\"{o}lder's inequality we have that
  \begin{align}\label{eq:Moser_VSRW_split}
    &\Norm{v^{2 \al_k}}{\al p_*, \al, Q(x_0,\si_{k+1} n)}
    \nonumber\\
    % &\mspace{36mu}\leq\;
    % \bigg(
    %   \frac{1}{|I_{\si_{k+1}}|}\;
    %   \int_{I_{\si_{k+1}}}\mspace{-18mu}
    %     \Norm{v_t^{2 \al_k}}{1, B(\si_{k+1}n)}^{\al-1}\,
    %     \Norm{v_t^{2 \al_k}}{\rho, B(\si_{k+1} n)}\,
    %   \md t
    % \bigg)^{\!\!1/\al}
    % \nonumber\\[0.5ex]
    &\mspace{36mu}\leq\;
    \bigg(
      \max_{t \in I_{\si_{k+1}}} \Norm{v_t^{2 \al_k}}{1, B(x_0, \si_{k+1}n)}
    \bigg)^{\!\!1-1/\al}\,
    % \bigg(
    %   \frac{1}{|I_{\si_{k+1}}|}\;
    %   \int_{I_{\si_{k+1}}}\mspace{-18mu}
    %   \Norm{v_t^{2\al_k}}{\rho, B(x_0, \si_{k+1} n)}\, \md t
    % \bigg)^{\!\!1/\al}\mspace{-15mu}.
    \Norm{v^{2 \al_k}}{\rho, 1, Q(x_0, \si_{k+1} n)}^{1/\al}.  
  \end{align}
  By the Sobolev inequality in \cite[Proposition 3.5]{ADS13} the integrand can be estimated from above by
  \begin{align*}
    &\Norm{(\eta_k v_t^{\al_k})^2}{\rho, B(x_0, \si_k n)}  
    \\
    &\mspace{36mu}\leq\;
    C_{\mathrm{S}}\, n^2\, \Norm{\nu^{\om}}{q, B(x_0, \si_k n)}
    \Bigg(
      \frac{\cE_{\eta_k^2}^{\om}\big(v_t^{\al_k}\big)}{|B(x_0, \si_k n)|}
      \,+\,
      \frac{1}{(\tau_k n)^2}\, %\Norm{\mu^{\om}}{p, B(x_0, \si_k n)}
      \Norm{v_t^{2 \al_k}}{1, B(x_0, \si_k n)}\!
    \Bigg).
  \end{align*}
  Then, an application of the energy estimates as derived in Lemma~\ref{lem:prep_moserV} yields
  \begin{align*}
    &\max_{t \in I_{\si_{k+1}}}
    \Norm{(\eta_k v_t^{\al})^2}{1, B(x_0, \si_k n)}
    \,+\,
    \int_{I_{\si_{k+1}}} 
    \frac{\cE_{\eta_k^2}^{\om}(v_t^{\al_k})}{|B(x_0, \si_k n)|}\, \md t
    \\[.5ex]
    &\mspace{32mu}\overset{\eqref{eq:DFV}}{\;\leq\;}
    c\,
    \al_k^2\, \Big(1 \vee \Norm{\mu^{\om}}{p, B(x_0, n)}\Big)\,
    \bigg(
      \frac{1}{\ve \th \tau_k^2} + n^2 \tilde{h}^{\om}(\phi)
    \bigg)\,
    \Norm{v^{2 \al_k}}{p_*, 1, Q(x_0, \si_k n)}.
    %\int_{I_{\si_k}}\! \Norm{v_t^{2 \al_k}}{p_*,\tilde B(\si_k n)}\, \md t,
  \end{align*}
  % % 
  % and
  % %
  % \begin{align*}
  %   \int_{I_{\si_{k+1}}} 
  %   \frac{\cE_{\eta_k^2}^{\om}(v_t^{\al_k})}{|B(\si_k n)|}\, \md t 
  %   \overset{\eqref{eq:DFV}}{\;\leq\;}
  %   c\,
  %   \al_k^2\, \Norm{\mu^{\om}}{p, B(n)}\,
  %   \bigg(
  %     \frac{1}{\ve \th \tau_k^2} + n^2 \tilde{h}^{\om}(\phi)
  %   \bigg)\,
  %   \Norm{v^{2 \al_k}}{p_*, 1, Q(x_0, \si_k n)}.
  % \end{align*}
  %
  Combining the estimates above yields
  \begin{align}
    &\Norm{v}{2 \al_{k+1} p_*, 2 \al_{k+1}, Q(x_0, \si_{k+1} n)}
    \nonumber\\[.5ex]  
    &\mspace{36mu}\leq\;
    c\, 2^{k/\al_k}\,
    \bigg(
      \Big( 1 + \th n^2 \tilde{h}^{\om}(\phi) \Big)
      \frac{m^{\om}(n)}{\ve \th (\si - \si')^2}
    \bigg)^{\!\!1/2 \al_k}
    \Norm{v}{2 \al_k p_*, 2 \al_k, Q(x_0, \si_k n)}.
  \end{align}
  Next we consider the case $\tau_k n < 1$. We shall estimate both terms on the right-hand side of \eqref{eq:Moser_VSRW_split}.  Note that for any $t \in I_{\si_{k+1}}$,
  \begin{align*}
    \Norm{v_t^{2\al_k}}{\rho, B(x_0, \si_k n)}
    &\;\leq\;
    \Big(\max_{x \in B(x_0, \si_{k} n)} v_t(x)^{2\al_k}\Big)^{1-p_*/\rho}\,
    \Norm{v_t^{2\al_k}}{p_*, B(x_0,\si_k n)}^{p_*/\rho}
    \\
    &\;\leq\;
    \Big(   
      |B(x_0, \si_k n)|^{1/p_*}\, \Norm{v_t^{2\al_k}}{p_*, B(x_0, \si_k n)}
    \Big)^{1-p_*/\rho}\,
    \Norm{v_t^{2\al_k}}{p_*, B(x_0, \si_k n)}^{p_*/\rho}
    \\[1ex]
    &\;\leq\;
    |B(x_0, \si_k n)|^{1/p_* - 1/\rho}\,
    \Norm{v_t^{2\al_k}}{p_*, B(x_0, \si_k n)}.
  \end{align*}
  Since $d (1/p_* - 1/\rho) \leq 2$ and $n < 1/\tau_k$ by assumption, we find
  \begin{align*}
    \Norm{v^{2\al_k}}{\rho,1, Q(x_0, \si_{k+1} n)}
    \;\leq\;
    c\, \frac{2^{2k}}{(\si - \si')^2}\,
    \Norm{v^{2\al_k}}{p_*, 1, Q(x_0, \si_k n)}.
  \end{align*}
  Hence, it remains to estimate the first term on the right-hand side of \eqref{eq:Moser_VSRW_split}.  Since $v > 0$ is caloric on $Q(x_0, n)$, it holds for any $t \in I_{\si}$ and $x \in B(x_0, \si n)$ that $\partial_t v_t(x) = (\cL_{\phi}^{\om} v){x} \geq - \mu^{\om}(x) v_t(x)$.  In particular, a summation over all $x \in B(x_0, \si_k n)$ yields
  \begin{align*}
    \frac{1}{2\al_k} \partial_t \Norm{v_t^{2\al_k}}{1, B(x_0, \si_{k} n)}
    \;\geq\;
    - \Norm{v_t^{2\al_k}}{1, B(x_, \si_k n), \mu^{\om}}.
  \end{align*}
  Let now $\xi_k \in C^{\infty}(\bbR)$ be a cut-off function in time such that $\supp \xi_k \subset I_{k}$, $\xi_k \equiv 1$ on $I_{k+1}$, $\xi_k( t_k ) = 0$ and $\big\| \xi_k' \big\|_{\raisebox{-0ex}{$\scriptstyle L^{\raisebox{.1ex}{$\scriptscriptstyle \!\infty$}} ([0,\th n^2])$}} \leq  1 / (\ve \tau_k \th n^2)$, where we write in short $t_k\ldef (1-\si_k)s''+\si_k \th n^2$ for the right endpoint of $I_k$.  Hence,
  \begin{align*}
    \partial_t \Big( \xi_k(t) \, \Norm{v_t^{2 \al_k}}{1, B(x_0, \si_k n} \Big)
    \mspace{2mu}\geq\;
    \xi_k'(t)\, \Norm{v_t^{2 \al_k}}{1, B(x_0, \si_k n)}
    -  2\al_k\, \xi_k(t)\,
    \Norm{v_t^{2 \al_k}}{1, B(x_0, \si_k n), \mu^{\om}}.
  \end{align*}
  An integration over the interval $[t_*, t_k]$, where $t_* \in I_{k+1}$ is chosen in such a way that
  \begin{align*}
    \Norm{v_{t_*}^{2\al_k}}{1, B(x_0, \si_k n)}
    \;=\;
    \max_{t \in I_{\si_{k+1}}} \Norm{v_{t}^{2\al_k}}{1, B(x_0, \si_k n)},
  \end{align*}
  yields
  \begin{align*}
    &\max_{t\in I_{k+1}} \Norm{v_t^{2 \al_k}}{1, B(x_0, \si_k n)}
    \\[.5ex]  
    &\mspace{36mu}\leq\;
    \int_{t_*}^{t_k}
      \Big(
        2\al_k\, \xi_k(t)\, \Norm{v_t^{2 \al_k}}{1, B(x_0, \si_k n), \mu^{\om}}
        \,-\,
        \xi_k'(t)\,
        \Norm{v_t^{2 \al_k}}{1, B(x_0, \si_k n)}
      \Big)\,
    \md t.
  \end{align*}
  By using Jensen's and H\"older's inequality, we finally find that
  \begin{align*}
    \Norm{v^{2\al_k}}{1, \infty, Q(x_0, \si_{k+1} n)}
    \;\leq\;
    c \al_k 2^k\, \frac{1 \vee \Norm{\mu^{\om}}{p, B(x_0, n)}}{\ve (\si-\si')}\,
    \Norm{v^{2\al_k}}{p_*, 1, Q(x_0, \si_k n)}
  \end{align*}
  Combining the estimates above yields
  \begin{align*}
    &\Norm{v}{2\al_{k+1} p_*, 2 \al_{k+1}, Q(x_0, \si_{k+1} n)}
    \\[.5ex]  
    &\mspace{36mu}\leq\;
    c\, 2^{k/\al_k}\,
    \bigg(
      \frac{m^{\om}(n)}{\ve \th (\si-\si')^2}
    \bigg)^{\!\!1/(2\al_k)}\,
    \Norm{v}{2\al_k p_*, 2\al_k, Q(x_0, \si_k n) }.
  \end{align*}
  By following line by line the proofs of \cite[Proposition~4.2 and Corollary~4.3]{ADS13a} we get
  \begin{align*}
    \max_{(t,x) \in Q(x_0, \si' n)} v(t,x)
    \;\leq\;
    c \,
    \bigg(
      \Big(1 + \th n^2 \tilde{h}^{\om}(\phi)\Big)
      \frac{m^{\om}(n)}{\ve \th (\si - \si')^2}
    \bigg)^{\!\!\ka}\,
    \Norm{v}{2, 2, Q(x_0, \si n)},
  \end{align*}
  for some $\ka < \infty$.  Now we choose $\si' = 1/2$ and $\si = 1$.  Then, in view of Lemma~\ref{lemma:hke:apriori_V} we obtain
  \begin{align*}
    \max_{(t,x) \in Q(x_0, n/2)} v(t,x)
    \overset{\eqref{eq:hke:apriori_V}}{\;\leq\;}
    c\,
    \bigg(
      \Big( 1 + \th n^2 \tilde{h}^{\om}(\phi) \Big)\, \frac{m^{\om}(n)}{\ve \th}
    \bigg)^{\!\!\ka}\,
    \me^{\tilde{h}^{\om}(\phi) \th n^2}\, n^{-d/2}\, \norm{\phi f}{2}{V}.
  \end{align*}
  Since for any $\ve \in (0, 1/2)$ there exists $c(\ve) < \infty$ such that for all $n \geq 1$ and $\th \in (0, 1]$
  \begin{align*}
    \Big(1 + \th n^2 \tilde{h}^{\om}(\phi) \Big)^{\!\ka}\,
    \me^{-(1-2\ve) \tilde{h}^{\om}(\phi) \th n^2}
    \;\leq\;
    c(\ve)
    \;<\;
    \infty,
  \end{align*}
  the claim follows.
\end{proof}
\begin{prop}  \label{prop:est_cpV}
  Suppose that Assumption~\ref{ass:lim_const} holds and let $x_0 \in V$ be fixed. Then,  for any given $x \in V$ and $t$ with $\sqrt{t} \geq N(x_0) \vee 2(N_1(x_0) \vee N_2(x_0))$ the solution $u$ of the Cauchy problem in \eqref{eq:cauchy_prob_V} satisfies
  \begin{align*}
    |u(t,x)|
    \;\leq\;
    C_8\,t^{-d/2}\,
    \sum_{y\in V}\! \bigg(1 + \frac{d(x_0, x)}{\sqrt{t}}\bigg)^{\!\!\ga}
    \bigg( 1 + \frac{d(x_0, y)}{\sqrt{t}}\bigg)^{\!\!\ga}
    \me^{\psi(y)-\psi(x)+2\tilde{h}^{\om}(\phi) t}\, f(y)
 \end{align*}
 with $\ga \ldef 2\ka - d/2$ and $C_8= C_8(d, p, q, \bar{\mu}_p, \bar{\nu}_q)$.
\end{prop}
\begin{proof}
  Given \eqref{eq:max_ineqV} this follows by the same arguments as in Proposition~\ref{prop:est_cp}. %Note that one needs to work with the chemical distance $d^{\om}$ instead of the graph distance.
\end{proof}
\begin{proof}[Proof of Theorem~\ref{thm:hke_VSRW}]
  We apply Proposition~\ref{prop:est_cpV} on  the heat kernel $p^\om(t,x,y)$, that is $f = \indicator_{\{y\}}$, which yields
  \begin{align*}
    p^{\om}(t, x, y)
    \;\leq\;
    C_8\, t^{-d/2}\,
    \bigg( 1 + \frac{d(x_0,x)}{\sqrt t} \bigg)^{\!\!\ga}
    \bigg( 1 + \frac{d(x_0,y)}{\sqrt t} \bigg)^{\!\!\ga}\,
    \me^{\psi(y)-\psi(x) + 2\tilde h_\om(\phi) t}. 
  \end{align*}
  Now we again optimize over $\phi = \me^{\psi}$, where
  \begin{align*}
    \psi(u)
    \;\ldef\;
    - \la \, \min \big\{ d^{\om}(x,u), d^{\om}(x,y) \big\}, \qquad \la > 0.
  \end{align*}
  Recall the definition of $d^{\om}(x,y)$. Since $|\nabla \psi(e)| \leq \la \big(1 \wedge 1/\om(e)\big)^{1/2}$ for any $e \in E$ and $a \big(\cosh(x)-1\big)\leq \cosh(\sqrt{a} x)$ for all $x\in \bbR$ for any $a \geq 1$, we get
  \begin{align*}
    \tilde{h}^{\om}(\phi)
    \;=\;
    \max_{x \in V} 2 \sum_{y \sim x} \om(x,y)\,
    \big( \cosh(\nabla \psi(\{x,y\})) - 1 \big)
    \;\leq\;
    2 C_{\text{deg}}\, \big(\cosh(\la) - 1 \big).
  \end{align*}
  Hence,
  \begin{align*}
    \exp\big(
      \psi(y) - \psi(x) + 2 \tilde{h}^{\om}(\phi) t
    \big)
    \;\leq
    \exp\!%
    \bigg(
      d^{\om}(x,y)
      \Big(
        \!-\la + \frac{2tC_{\text{deg}}}{d^{\om}(x,y)}
        \big(\me^{\la} + \me^{-\la} - 2\big)
      \Big)
    \bigg).
  \end{align*}
  The claim follows now by the same arguments as in the proof of Theorem~\ref{thm:hke} above.  
\end{proof}

\appendix
\section{Proof of Lemma~\ref{lemma:chemical:distance}}
\label{sec:chemical:distance}
\begin{proof}
  Set $u_1(k) = \big(\frac{k-1}{2}/(\ln \frac{k-1}{2})^{\de} \big)^{1/\al}$ and $u_2(k) = \big(\frac{k-1}{2} (\ln \frac{k-1}{2})^{1+\de}\big)^{1/\al}\!\!$.  Obviously, the sequences $\{u_1(k) : k \geq 7 \}$ and $\{u_2(k) : k \geq 3\}$ are non-decreasing and we have $k\prob[Z_1 > u_1(k)] \to \infty$ as $k$ tends to $\infty$.  Moreover, an elementary computation shows that
  \begin{align*}
    \sum_{k=1}^{\infty}\, \prob\!\big[Z_1 > u_2(k)\big]
    \;\leq\;
    2k_0 + 2\, \sum_{k=1}^{\infty} \frac{1}{k (\ln k)^{1+\de}}
    \;<\;
    \infty
  \end{align*}
  and
  \begin{align*}
    \sum_{k=1}^{\infty}\, \prob\!\big[Z_1 > u_1(k)\big]
    \exp\Big(\!-\! k \prob\!\big[Z_1 > u_1(k)\big]\Big)
    \;\leq\;
    2k_0 + 2\sum_{k=1}^{\infty} \frac{(\ln k)^{\de}}{k}\, \me^{-(\ln k)^{\de}/2}
    \;<\;
    \infty,
  \end{align*}
  where $k_0$ is chosen such that $k^{-\al} / 2 \leq \prob[Z_1 > k] \leq 2 k^{-\al}$ for all $k \geq k_0$.  Thus, we conclude from \cite[Theorem~3.5.1 and 3.5.2]{EKM97} that for $\prob$-a.e.\ $\om$ there exists $L_0 = L_0(\om)$ such that for all $k \geq L_0$,
  \begin{align}
    u_1(2k+1)
    \;=\;
    \big(k / (\ln k)^{\de} \big)^{1/\al}
    \;\leq\;
    \max_{-k \leq i \leq k} Z_i
    \;\leq\;
    \big(k\, (\ln k)^{1+\de} \big)^{1/\al}
    \;=\;
    u_2(2k+1).
  \end{align}
  In order to get an upper bound on the chemical distance $d^{\om}(0, L e_1)$ we consider the path $\ga_{0, L e_1} = (z_0, \ldots, z_{l})$ with length $l = 2 L_* + L$, where $L_* \ldef \argmax_{-L^{\ve} \leq i \leq L^{\ve}} Z_i$ and $\ve = 2\al / (2\al+1)$ that is chosen as follows. Starting at the origin, the path $\ga_{0, L e_1}$ goes first $|L_*|$ steps in vertical direction to $(0,L_*)$, then it goes $L$ steps in horizontal direction to $(L, L_*)$ and finally it goes again $|L_*|$ steps  in vertical direction to $L e_1$, i.e.\ for the horizontal direction the path chooses the line with an $e_2$-coordinate in $(-L^{\ve}, L^{\ve})$ which has maximal conductance.  Hence, for $L \geq L_0$,
  \begin{align*}
    d^{\om}\big(0, L e_1\big)
    &\;\leq\;
    \sum_{j=0}^{l-1} \big(1 \vee \om(z_j,z_{j+1})\big)^{-1/2}
    %\\
    \;\leq\;
    2 L^\ve + L\, \Big( \max_{-L^{\ve} \leq i \leq L^{\ve}} Z_i  \Big)^{\!-1/2}
    \\[1.5ex]
    &\;\leq\;
    2 L^\ve + L\, u_1(2L^{\ve}+1)^{-1/2}\mspace{3mu} 
    \;\leq\;
    2 L^{\ve} + L^{1 - \ve / 2\al}\, (\log L)^{\de/2\al}
  \end{align*}
  for which we conclude the claimed upper bound, since $\ve = 2\al / (2\al + 1)$.  

  In order to prove the lower bound, let $\Ga_{0, L e_1}(k)$ be the set of all paths $\ga_{0, L e_1}$ starting at the origin and ending at $L e_1$ such that $\max_{z \in \ga_{0, L e_1}} |z \cdot e_2| = k$.  Then, 
  \begin{align*}
    d^{\om}(0, Le_1)
    &\;=\;
    \inf_{k\in \bbN}\, \inf_{\ga \in \Ga_{0, L e_1}(k)}
    \sum_{(z, z') \in \ga} 1 \wedge \om(z,z')^{-1/2}
    \\[1ex]
    &\;\geq\;
    \inf_{k \in \bbN}\;
    2k + L\, \Big(\max_{-(k \vee L_0) \leq i \leq (k \vee L_0)} Z_i \Big)^{\!-1/2}
    \\[1ex]
    &\;\geq\;
    \inf_{k \in \bbN}\;
    2k + L\, \Big((k \vee L_0) \big(\ln (k \vee L_0)\big)^{1+\de}\Big)^{\!-1/2\al}.
  \end{align*}
  Set $L_* = (L / (\ln L)^{(1+\de)/(2\al)})^{2\al/(2\al+1)}$.  Notice that for $L \geq L_0$ there exist constants $0 < c < C < \infty$ such that the minimum is attained at a unique point $k_* \in [c L_*, C L_*]$.  Thus, there exists a constant $c_Z \equiv c_Z(\al,\de) > 0$ such that
  \begin{align*}
    d^{\om}(0, Le_1)
    \;\geq\;
    c_Z k_*
    \;\geq\;
    c_Z\, (\ln L)^{-(1+\de)/(2\al+1)}\, L^{2\al/(2\al+1)},
  \end{align*}
  which completes the proof of the lower bound.
\end{proof}

\section{Technical estimates}
In this section we collect some technical estimates needed in the proofs.
\begin{lemma}
  \begin{enumerate}[ (i) ]
    \item For all $a, b \geq 0$ and any $\al > 1/2$,
      \begin{align}\label{eq:rule:chain:1}
        \big(a^{\al} - b^{\al}\big)^2
        \;\leq\;
        \bigg|\frac{\al^2}{2\al-1}\bigg|\, \big(a - b\big)\,
        \big(a^{2\al-1} - b^{2\al-1}\big).
      \end{align}
    \item For $a, b \geq 0$ and any $\al \geq 1$,
      \begin{align}\label{eq:rule:chain:2}
        \big| a^{2 \al - 1} b \,-\, a b^{2 \al -1} \big|
        \;\leq\;
        \Big(1 - \frac{1}{\al} \Big)\,
        \big| a^{2 \al} - b^{2 \al} \big|
      \end{align}
    \item For all $a,b \geq 0$ and any $\al \geq 1/2$,
      \begin{align}\label{eq:rule:chain:3}
        \big(a^{2\al-1} + b^{2\al-1}\big)\, \big|a - b\big|
        \;\leq\; 
        4\,
        \big| a^{\al} -  b^{\al}\big|\,
        \big(a^{\al} + b^{\al}\big).
      \end{align}
  \end{enumerate}
\end{lemma}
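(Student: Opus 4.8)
The three inequalities \eqref{eq:rule:chain:1}, \eqref{eq:rule:chain:2}, \eqref{eq:rule:chain:3} are proved separately. In each case I would first reduce to $a \ge b \ge 0$, since both sides are symmetric in $a$ and $b$ and depend only on the absolute values of the relevant differences, and dispose of the degenerate case $b = 0$ directly; so below $a > b > 0$. The one tool used throughout is the elementary representation $a^{\ga} - b^{\ga} = \ga\int_b^a t^{\ga-1}\,\md t$, valid for $a > b \ge 0$ and any $\ga > 0$.

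For \eqref{eq:rule:chain:1}, with $\al > 1/2$ write $a^{\al} - b^{\al} = \al\int_b^a t^{\al-1}\,\md t$ and apply the Cauchy--Schwarz inequality to obtain $\big(\int_b^a t^{\al-1}\,\md t\big)^2 \le (a-b)\int_b^a t^{2\al-2}\,\md t = (a-b)\,(a^{2\al-1}-b^{2\al-1})/(2\al-1)$; the condition $\al > 1/2$ is exactly what makes the exponent $2\al-2$ exceed $-1$, so that $\int_b^a t^{2\al-2}\,\md t$ evaluates to $(a^{2\al-1}-b^{2\al-1})/(2\al-1)$ (also in the limit $b\downarrow 0$). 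Multiplying through by $\al^2$ gives the claim, and the case $b = 0$ degenerates to $(\al-1)^2 \ge 0$.

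For \eqref{eq:rule:chain:3}, observe first that $4\,|a^{\al}-b^{\al}|(a^{\al}+b^{\al}) = 4\,(a^{2\al}-b^{2\al})$, and that $t\mapsto t^{2\al-1}$ is nondecreasing because $2\al-1\ge 0$, so $b^{2\al-1}\le a^{2\al-1}$. Expanding $(a^{2\al-1}+b^{2\al-1})(a-b)$ and rearranging, the assertion reduces to $ab^{2\al-1} + 3b^{2\al} \le 3a^{2\al} + a^{2\al-1}b$; using $3b^{2\al} = 3b\,b^{2\al-1} \le 3b\,a^{2\al-1}$ it suffices to verify $ab^{2\al-1} + 2a^{2\al-1}b \le 3a^{2\al}$, which holds term by term from $b^{2\al-1}\le a^{2\al-1}$ and $b\le a$.

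The substantive case is \eqref{eq:rule:chain:2}. For $\al = 1$ both sides vanish, so assume $\al > 1$; then $a^{2\al-1}b - ab^{2\al-1} = ab\,(a^{2\al-2}-b^{2\al-2}) \ge 0$ and $a^{2\al}-b^{2\al}\ge 0$. Substituting the integral representations for $a^{2\al-2}-b^{2\al-2}$ and $a^{2\al}-b^{2\al}$ and cancelling the common factor $2\al-2$, the inequality becomes $J \ge 0$, where $J \ldef \int_b^a t^{2\al-3}(t^2-ab)\,\md t$. The plan is to exploit the symmetry of $[b,a]$ under $t\mapsto ab/t$: this change of variables (Jacobian $ab\,t^{-2}$) gives the second representation $J = -(ab)^{2\al-1}\int_b^a t^{-2\al-1}(t^2-ab)\,\md t$, and averaging yields $2J = \int_b^a t^{-2\al-1}\big((t^2)^{2\al-1} - (ab)^{2\al-1}\big)(t^2-ab)\,\md t$; since $2\al-1 > 0$ the factor $(t^2)^{2\al-1}-(ab)^{2\al-1}$ has the same sign as $t^2-ab$, so the integrand is nonnegative and $J \ge 0$. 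I expect the bookkeeping here --- matching the two integral representations so the prefactor $2\al-2$ cancels cleanly and tracking the Jacobian in $t\mapsto ab/t$ --- to be the only real obstacle; an equivalent route, if the substitution feels unnatural, is to normalise $b = 1$ and show $x^{2\al-1}-x \le (1-1/\al)(x^{2\al}-1)$ for $x\ge 1$ by checking that the difference of the two sides and its first derivative both vanish at $x=1$ while its second derivative is nonnegative on $[1,\infty)$.
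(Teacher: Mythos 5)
Your proof is correct, and it is more self-contained than the paper's: the paper does not actually prove (i) and (iii) at all, referring instead to Lemma~A.1 of \cite{ADS13}, and for (ii) it normalises $z=a/b\in[0,1)$ and asserts (without detail) that $z\mapsto(z^{2\alpha-1}-z)/(1-z^{2\alpha})$ is increasing with limit $1-1/\alpha$ as $z\to1$. Your arguments take different routes: for (i) the Cauchy--Schwarz bound on $\int_b^a t^{\alpha-1}\,\mathrm{d}t$ is a clean one-line proof (and your observation that $\alpha>1/2$ is exactly the integrability/evaluation threshold for $t^{2\alpha-2}$ is the right remark); for (iii) the direct expansion and term-by-term comparison is elementary and complete; and for (ii) your main argument via $J=\int_b^a t^{2\alpha-3}(t^2-ab)\,\mathrm{d}t\ge0$, symmetrised under $t\mapsto ab/t$, checks out (the substitution indeed produces $J=-(ab)^{2\alpha-1}\int_b^a t^{-2\alpha-1}(t^2-ab)\,\mathrm{d}t$, and the averaged integrand is nonnegative because $x\mapsto x^{2\alpha-1}$ is increasing). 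Your fallback for (ii) --- normalising $b=1$ and checking that $g(x)=(1-1/\alpha)(x^{2\alpha}-1)-(x^{2\alpha-1}-x)$ satisfies $g(1)=g'(1)=0$ and $g''\ge0$ on $[1,\infty)$ --- is essentially a rigorous version of the monotonicity claim the paper leaves to the reader, so what your write-up buys is a fully verifiable, citation-free proof of all three inequalities, at the cost of somewhat more computation than the paper's two-line treatment.
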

\begin{proof}
  For the proof of statements  \textit{(i)} and  \textit{(iii)} we refer to \cite[Lemma A.1]{ADS13}. 

  \textit{(ii)} The statement is trivial for $a = b$ so we may assume that $0 \leq a < b$ and set $z \ldef a/b$.  Then, \eqref{eq:rule:chain:2} is equivalent to
  \begin{align*}
    \frac{z^{2\al -1}-z}{1-z^{2\al}}
    \;\leq\;
    1 - \frac{1}{\al},
    \qquad \forall\, z \in [0,1).
  \end{align*}
  But this follows from the fact that the left hand side is increasing in $z$ and converges to $1-\frac 1 \al$ as $z\to 1$.
\end{proof}

\bibliographystyle{plain}
\bibliography{literature}

\end{document}